\documentclass[10pt]{amsart}
\usepackage[margin=1.2in]{geometry}
\usepackage{amsmath,amsthm,amssymb}
\usepackage[mathscr]{eucal}
 \usepackage{cite}
\usepackage{upgreek}
\usepackage[bookmarks=false]{hyperref}
\usepackage{enumerate}
\usepackage{mathtools}

\numberwithin{equation}{section}

\newtheorem{thm}{Theorem}[section]
\newtheorem*{thm*}{Theorem}
\newtheorem{lem}[thm]{Lemma}
\newtheorem*{prob*}{Problem}

\newtheorem{prop}[thm]{Proposition}
\newtheorem*{prop*}{Proposition}
\newtheorem{conj}[thm]{Conjecture}
\newtheorem{cor}[thm]{Corollary}
\newtheorem*{cor*}{Corollary}

\theoremstyle{definition}
\newtheorem{defn}[thm]{Definition}
\newtheorem*{defn*}{Definition}

\newtheorem{remark}[thm]{Remark}

\newtheorem*{question*}{Question}
\newtheorem*{Pquestion*}{Popa's question}

\newtheorem*{conv*}{Convention}

\newcommand{\N}{\mathbb{N}}
\newcommand{\F}{\mathbb{F}}
\newcommand{\R}{\mathbb{R}}
\newcommand{\C}{\mathbb{C}}

\newcommand{\M}{\mathbb{M}}
\newcommand{\E}{\mathbb{E}}

\renewcommand{\P}{\mathbb{P}}

\newcommand{\cH}{\mathcal{H}}

\newcommand{\cK}{\mathcal{K}}
\newcommand{\cM}{\mathcal{M}}

\newcommand{\cU}{\mathcal{U}}

\newcommand{\op}{\operatorname{op}}
\newcommand{\tr}{\operatorname{tr}}
\newcommand{\Tr}{\operatorname{Tr}}

\newcommand{\Ad}{\operatorname{Ad}}

\newcommand{\Prob}{\operatorname{Prob}}

\DeclareMathOperator{\re}{Re}

\DeclarePairedDelimiter{\ip}{\langle}{\rangle}
\newcommand{\norm}[1]{\left\Vert #1\right\Vert}

\begin{document}

\title[Questions on the structure of random embeddings of $L(\mathbb{F}_2)$]
{Questions on the structure of random embeddings of $L(\mathbb{F}_2)$}

\author[B. Hayes]{Ben Hayes}
\address{Department of Mathematics, University of Virginia\\
141 Cabell Drive, Kerchof Hall
P.O. Box 400137,
Charlottesville, VA 22904}
\email{brh5c@virginia.edu}
\urladdr{https://sites.google.com/site/benhayeshomepage/}

\author[D. Jekel]{David Jekel}
\address{Department of Mathematical Sciences, University of Copenhagen, Universitetsparken 5, 2100 Copenhagen {\O}, Denmark}
\email{daj@math.ku.dk}
\urladdr{https://davidjekel.com}

\author[S. Kunnawalkam Elayavalli]{Srivatsav Kunnawalkam Elayavalli}
\address{Department of Mathematics, UC Berkeley, 970 Evans Hall, Berkeley, CA 94720, USA}\email{sriva@umd.edu}
\urladdr{https://sites.google.com/view/srivatsavke/home}

\begin{abstract}
Motivated by recent developments at the interface of operator algebras and random matrix theory, we propose new conjectures concerning the asymptotic structure of random matrix models of the countable free groups. The first conjecture predicts a random matrix analogue of the Akemann-Ostrand property for free groups, and reveals a succinct approach to recover the Peterson-Thom property for $L(\mathbb{F}_2)$.  The second stronger conjecture is motivated by continuous model theory. It predicts that the \emph{random} embedding of the free group factor into a matrix ultraproduct is \emph{existential}. We discuss the interesting relationship between these conjectures.  
\end{abstract}

\maketitle


\section{Introduction}
Given $\alpha=\sum_{w_{1},w_{2}\in \F_{r}}\alpha_{w_{1},w_{2}}w_{1}\otimes w_{2}\in \C[\F_{r}]\otimes \C[\F_{r}]$ (here and throughout $\F_{r}$ is the free group on $r$ letters), and  two tuples $u=(u_{j})_{j=1}^{r},(v_{j})_{j=1}^{r}$ of unitary operators such that $u_{i}v_{k}=v_{k}u_{i}$ for all $1\leq i,k\leq r$, we use 
\[\alpha(u,v)=\sum \alpha_{w_{1},w_{2}}w_{1}(u)w_{2}(v).\]
A natural problem from the perspective of random matrices is to understand the almost sure limits of the eigenvalue distributions of $\alpha(U^{(N)}\otimes 1_N, 1_N\otimes (U^{(N)})^{t})$ for, say, self-adjoint elements $\alpha\in C[\F_r]\otimes\C[\F_r]$, where $U^{(N)}\in \cU(\M_{N}(\C))^{r}$ is a random $N\times N$ unitary distributed according to Haar measure on $\cU(\M_{N}(\C))^{r}$. Let $a_{1},\cdots,a_{r}$ be the generators of $\F_{r}$, and $\lambda\colon \F_{r}\to\cU(B(\ell^{2}(\F_{r}))$. Then by Voiculescu's asymptotic freeness theorem \cite[Theorem 3.8]{VoicAsyFree}, we know that the noncommutative distribution of  $U^{(N)}$ converges weak$^{*}$ to the distribution of $\lambda(a):=(\lambda(a_{j}))_{j=1}^{r}$ with respect to the trace given by $\ip{\cdot \delta_{e},\delta_{e}}$. Using the notation $\lambda(a^{-1})=(\lambda(a_{j}^{-1}))_{j=1}^{r}$, this weak$^{*}$-convergence immediately implies that the eigenvalue distribution of $\alpha(U^{(N)}\otimes 1_N, 1_N\otimes (U^{(N)})^{t})$ converges weak$^{*}$ to the spectral measure of $\alpha(\lambda(a)\otimes 1,1\otimes \lambda(a^{-1}))$ acting $\ell^{2}(\F_{r})\otimes \ell^{2}(\F_{r})$ with respect to the vector $\delta_{e}\otimes \delta_{e}$. Thus the key difficulty here is to understand outlier eigenvalues in the eigenvalue distribution. This puts one immediately into the realm of the strong convergence phenomena, as first developed by Haagerup-Thorbj\o rnsen \cite{HTExt}, namely the study of asymptotics of operator norms of noncommutative $*$-polynomials in random matrices. This theory has seen a lot of activity in recent years, a sample set of which includes \cite{ BordenaveCollins2019, bordenave2023norm, collins2014strong, chen2024newapproachstrongconvergence, Chen_2026, gao2026newsource,loudermagee2025limitgroups, MageeThomas2023, MdLSstrongasymptoticfreenesshaar}. We direct the reader to the surveys \cite{BCICM, magee2025strong, vHCDM, vHICM, bordenave2025sparsegraphsbenjaminischrammlimits} for more information concerning this program. 

Na{\"\i}vela, one would still expect that the distribution $(U^{(N)}\otimes 1_N, 1_N\otimes (U^{(N)})^{t})$ converges strongly to $(\lambda(a)\otimes 1,1\otimes \lambda(a^{-1}))$. However, this is quickly seen to be false. Indeed, consider $\alpha=\sum_{j=1}^{r}a_{j}\otimes a_{j}^{-1}$, where $a_{1},\cdots,a_{r}$ are the generators of $\F_{r}$. for every integer $N$, we have that 
\[\left\|\frac{1}{2r}\sum_{j=1}^{r}U_{j}^{(N)}\otimes \overline{U_{j}^{(N)}}+\frac{1}{2r}\sum_{j=1}^{r}(U_{j}^{(N)})^{-1}\otimes (U_{j}^{(N)})^{t}\right\|=1,\]
(e.g. by acting on the identity matrix), whereas by \cite[Theorem 3]{KestenSR},
\[\left\|\frac{1}{2r}\sum_{j=1}^{r}
\lambda(a_{j})\otimes \lambda(a_{j})+\sum_{j=1}^{r}
\lambda(a_{j})^{-1}\otimes \lambda(a_{j})^{-1}\right\|=\frac{\sqrt{2r-1}}{2r}<1.\]
Consideration of the asymptotics of the operator norm after applying this specific $\alpha$ to random matrices {and then restricting to the orthogonal complement of the identity matrix} leads to the notion of \emph{quantum expanders}, and Hasting's result \cite{Hastings} (see also \cite{PisierQuantumExpander}) asserts that, almost surely, we have 
\[\lim_{N\to\infty}\left\|\left(\frac{1}{2r}\sum_{j=1}^{r}U_{j}^{(N)}\otimes \overline{U_{j}^{(N)}}+\sum_{j=1}^{r}(U_{j}^{(N)})^{-1}\otimes (U_{j}^{(N)})^{t}\right)\big|_{(\C 1)^{\perp}}\right\|=\frac{\sqrt{2r-1}}{2r}.\]
More generally, the work of Bordenave-Collins \cite{BCPerm, bordenave2023norm} asserts that if we restrict the tuple $(U_{j}^{(N)}\otimes \overline{U_{j}^{(N)}})_{j=1}^{r}$  to $(\C 1)^{\perp},$ then it converges strongly to an $r$-tuple of freely independent Haar unitaries. This still leaves open the case of more general polynomials in $(U^{(N)}\otimes 1_{N},1_{N}\otimes (U^{(N)})^{t})$. Here, the main obstruction to strong convergence is the observation that if $\beta\in \C(F_{r})$ is given, then
\[\lim_{N\to\infty}\left\|\alpha(U^{(N)}\otimes 1,1\otimes (U^{(N)})^{t})\#\beta(U^{(N)})\right\|_{2}=\]\[\left\|\alpha((\lambda(a_{j}))_{j=1}^{n},(\rho(a_{j}^{-1}))_{j=1}^{n})\left(\sum_{g}\beta_{g}\delta_{g}\right)\right\|_{2},\]
where $\beta=\sum_{g}\beta_{g}g$, and $\lambda,\rho$ are the left/right regular representations of $\F_{r}$.  Thus,
\[
\liminf_{N\to\infty}\left\|\alpha(U^{(N)}\otimes 1,1\otimes (U^{(N)})^{t}) \right\| \geq \left\|\alpha((\lambda(a_{j}))_{j=1}^{n},(\rho(a_{j}^{-1}))_{j=1}^{n}) \right\|_{B(\ell^2(\F_r))}.
\]
We always have that 
\[\|\alpha(\lambda(a)\otimes 1,1\otimes \lambda(a^{-1}))\|_{B(\ell^{2}(\F_{r})\otimes \ell^{2}(\F_{r}))}\leq \|\alpha((\lambda(a_{j}))_{j=1}^{n},(\rho(a_{j}^{-1}))\|_{B(\ell^{2}(\F_{r}))}\,\]
and equality does not always hold (e.g. the two sides are not equal for $\alpha=\sum_{j}a_{j}\otimes a_{j}^{-1}$). Thus if we want strong convergence of $(U^{(N)}\otimes 1_{N},1_{N}\otimes (U^{(N)})^{t})$ to $(\lambda(a)\otimes 1,1\otimes \lambda(a))$, then we need to remove not just scalar multiples of the identity, but also $*$-polynomials in the $U^{(N)}$ themselves.
 We state a conjecture which asserts precisely that this is the only obstruction, and once such polynomials are removed, then we have strong convergence to $(\lambda(a)\otimes 1,1\otimes \lambda(a))$.

\begin{conj}\label{intro conj: the conj}
	\label{scmlskmc}
	Given $\alpha\in \C[\F_r]\otimes\C[\F_r]$, and $u$ an $r$-tuple of free Haar unitaries for almost every $U^{(N)}\in \prod_{N}\cU(\cM_{N}(\C)^{r})$ the following estimate holds:
	$$ \sup_{R>0} \inf_{F\subset \C[\F_r] \text{ finite}}\ \limsup_{N\to\infty} \sup_{\substack{A\in\M_N(\C), \norm{A}\leq R, \norm{A}_2\leq 1,\\ \forall P\in F, \tr(A^*P(U^N))=0}} \norm{\alpha(U^{(N)}\otimes 1_N, 1_N\otimes (U^{(N)})^{t}) \# A }_2 $$
    $$\leq \norm{\alpha(u\otimes 1, 1\otimes u^{-1})}_{C^{*}(u)\otimes_{\min} C^{*}(u) },$$
	where $(B\otimes C)\#A = BAC^T$.
\end{conj}

Our motivation for the above conjecture comes from the realm of operator algebras, particularly the Akemann-Ostrand property \cite{AOProp}, and the Peterson-Thom conjecture \cite{PetersonThom}. As we show in Corollary \ref{cor: PT corollary}, Conjecture \ref{intro conj: the conj} would give a different proof of the Peterson-Thom property for free group factors, asserting that any diffuse amenable subalgebra of a free group factor has a unique maximal amenable extensions. The new approach stated here distills the operator algebraic side of the argument as much as possible, while efficiently exporting many of the difficulties to the random matrix side. Recall that the first-named author in \cite[Theorem 1.1]{HayesPT} gave a random matrix conjecture about the strong convergence of $(U^{(N)}\otimes 1_{N},1_{N}\otimes V^{(N)})$ where $U^{(N)}$ and $V^{(N)}$ are \emph{independent} $r$-tuples of unitaries each distributed according to the Haar measure on $\cU(\M_{N}(\C))^{r}$ which he showed implied the Peterson-Thom conjecture. This random matrix conjecture of the first-named author has since been resolved by several sets of authors \cite{PTkilled, bordenave2023norm, MdLSstrongasymptoticfreenesshaar, chen2024newapproachstrongconvergence,  Parraud2024strong} and the methods involved have also had applications in geometry, random graph theory, and random representation theory.
We expect that our alternative resolution via  Conjecture \ref{intro conj: the conj} will also have other applications, for instance to the structure of free products of II$_1$ factors. We remark that there is additional motivation from random matrices. It is well known that $(U_{i}^{(N)}),(V_{i}^{(N)})$ are asymptotically free, where $(V_{i}^{(N)})$ are an independent copy of $(U_{i}^{(N)})_{i}$, and we now know by \cite{PTkilled, bordenave2023norm, chen2024newapproachstrongconvergence, MdLSstrongasymptoticfreenesshaar, Parraud2024strong} that $(U_{i}^{(N)}\otimes 1,1\otimes V_{i}^{(N)})$ strongly converge to $(\lambda(a)\otimes, 1\otimes \lambda(a_{i}))$. 
By \cite{MPFreeTranspose} we have that $(U_{i}^{(N)}),((U_{i}^{(N)})^{t})_{i=1}^{r}$ are asymptotically free, and from this it is plausible that $(U_{i}^{(N)}\otimes 1_{N},1_{N}\otimes (U_{i}^{(N)})^{t})_{i=1}^{r}$ strongly converge to $(\lambda(a)\otimes, 1\otimes \lambda(a))$, after we remove the subspace of polynomials in $(U_{i}^{(N)})_{i=1}^{r}$. 

The connection between Conjecture \ref{intro conj: the conj} and the Peterson-Thom conjecture goes through ultraproducts of matrix algebras. Given a sequence $(Q_{n},\tau_{n})$ and a free ultrafilter $\omega\in \beta\N\setminus\N$ of tracial von Neumann algebras, we define their ultrapower by
\[\prod_{n\to\omega}(Q_{n},\tau_{n}):=\frac{\{(a_{n})_{n}\in \prod_{n}N_{n}:\sup_{n}\|a_{n}\|<+\infty\}}{\{(a_{n})_{n}\in \prod_{n}N_{n}:\sup_{n}\|a_{n}\|<+\infty,\lim_{n\to\omega}\tau_{n}(a_{n}^{*}a_{n})=0\}}.\]
By the same argument as in \cite[Lemma A.9]{BO}, the ultraproduct is a von Neumann algebra with a faithful, normal tracial state
\[\tau_{\omega}((a_{n})_{n\to\omega})=\lim_{n\to\omega}\tau_{n}(a_{n}),\]
where we use $(a_{n})_{n\to\omega}$ for the image in $\prod_{n\to\omega}Q_{n}$ of a norm bounded sequence in $\prod_{n}Q_{n}$.
The main point of the ultraproduct is to convert properties holding asymptotically into properties holding exactly in the ultraproduct. E.g. Voiculescu's asymptotic theorem implies that for almost every $[U^{(N)}]_{N\in \N}\in \prod_{N}\cU(\M_{N}(\C))^{r},$ and for every free ultrafilter $\omega\in\beta\N\setminus\N$, there is a unique embedding $\Theta_{U}\colon L(\F_{r})\to \prod_{n\to\omega}(\M_{n}(\C),\tr)$ so that $\Theta_{U}(\lambda(a_{j}))=(U_{j}^{(N)})_{N\to\omega}$ for all $j=1,\cdots,r$. 

An embedding $Q\leq M$ of tracial von Neumann algebras is \emph{weakly coarse} if the orthocomplement $Q$-$Q$ bimodule $L^{2}(M)\ominus L^{2}(Q)$ is weakly contained in an infinite direct sum of the coarse bimodule, $L^2(Q)\otimes L^2(Q)$. E.g. if $Q$ has no amenable direct summand, this asserts a weak version of spectral gap for $Q\leq M$ (see \cite{Popa_weakspgap}), and as such can be viewed of a strengthening of ``strong ergodicity" of the inclusion $Q\leq M$, similarly to how an irreducible inclusion (i.e. one where $Q'\cap M=\C1$) is analogous to an ergodic action of a group.

As we show later, Conjecture \ref{intro conj: the conj} is equivalent to saying that, almost surely, the random embedding $\Theta_{U,\omega}$ is weakly coarse. Interestingly, this can be viewed as a random version of the influential Akemann-Ostrand property. Indeed, by a recent result of Ding-Peterson \cite[Theorem 7.20]{DP22} (see also \cite[Corollary 3.7]{marrakchi2025kadison}), it turns out that the Akemann-Ostrand property for $L(\mathbb{F}_2)$ is equivalent to the diagonal embedding of $L(\mathbb{F}_2)\subset L(\mathbb{F}_2)^{\omega}$ being weakly coarse. Conceptually, this analogy is quite satisfying because Ozawa showed that the AO property implies solidity \cite{OzawaSolid}, while our random AO property would imply the Peterson-Thom property which is can be viewed as an optimal strengthening of solidity.

Motivated by the above connection to the AO property, we are able to formulate the following stronger conjecture about random unitaries, involving not only traces of polynomials in $r$-variables, but traces of self-adjoint polynomials in $r+s$ variables, where we optimize over the $s$ auxiliary variables before substituting the random unitaries in the first $r$ variables.

\begin{conj} \label{intro conj: random optimization conjecture}
Let $U_1^{(N)}$, \dots, $U_r^{(N)}$ be independent Haar random unitaries.  Then for every $s \in \N$ and every self-adjoint $*$-polynomial in $r + s$ variables, we have
\begin{multline} \label{eq: random optimization conjecture}
\lim_{N \to \infty} \mathbb{E} \left[ \sup_{V_1,\dots,V_s \in \cU(\M_{N}(\C))} \tr_N(p(U_1^{(N)},\dots,U_r^{(N)},V_1,\dots,V_s)) \right] \\ = \sup_{v_1,\dots,v_s \in \mathcal{U}(L(\mathbb{F}_r))} \tau(p(u_1,\dots,u_r,v_1,\dots,v_s)).
\end{multline}
\end{conj}

In Theorem \ref{thm:intro main thm} (2), we will relate Conjecture \ref{intro conj: random optimization conjecture} to the concept of existential embeddings of operator algebras, which comes from continuous model theory \cite{FHS2014a}. An embedding $Q\leq M$ of tracial von Neumann algebras is \emph{existential} if a trace-preserving embedding $\theta\colon M\to Q^{\omega}$ so that $\theta|_{Q}$ is the diagonal embedding; here if $Q$ and $M$ are separable, then $\omega$ can be any ultrafilter on the natural numbers, but in the non-separable case, one must use an ultrafilter on a larger index set.  The relevant case for this paper is when $Q$ is separable and $M$ is not separable (e.g. an ultraproduct), so we use the following characterization:  For $Q$ with separable predual and a given ultrafilter $\omega$ on $\mathbb{N}$, the embedding $Q \leq M$ being existential means that for every separable $M_0$ with $Q \leq M_0 \leq M$, there exists an embedding $M_0 \to Q^\omega$ extending the diagonal embedding of $Q$.

Existential embeddings have many applications in operator algebras.  For instance, the McDuff property is equivalent to the embedding of $M$ into $M_2(\mathbb{C}) \otimes M$ being existential.  Popa's theorem on asymptotic free independence \cite{popa1995free} implies that for every $\mathrm{II}_1$ factor $M$, the embedding of $M$ into $M * M$ is existential.  The analogous concept of existential embeddings into free products for the $\mathrm{C}^*$-algebraic setting was formulated in Robert's notion of selflessness \cite{robertselfless}, which led to the resolution of the resolution to Blackadar's strict comparison problem for $C^*_r(\mathbb{F}_2)$ \cite{AGKEP}, and is a current area of active research.

Conjecture \ref{intro conj: random optimization conjecture} states in a precise sense that the ``random embedding is existential'', as we explain below in Theorem \ref{thm:intro main thm}. Note that even the existence of an existential embedding of $L(\mathbb{F}_2)$ into the matrix ultraproduct is open (see \cite{elayavalli2025}).  On the other hand, in the reverse direction, a recent result of Peterson shows that it is not possible for some $M$ elementarily equivalent to $\prod_{N \to \omega} \mathbb{M}_N$ to existentially embed into $L(\mathbb{F}_r)^\omega$ \cite[Theorem 7.10]{Peterson2026}, which indicates the difficulty of Conjecture \ref{intro conj: random optimization conjecture}.  Meanwhile, \cite{jekel2024upgradedfreeindependencephenomena} gives examples of nontrivial sup-formulas for $u_1$, \dots, $u_r$ which have the same qualitative behavior in $L(\mathbb{F}_r)$ and in the matrix ultraproduct, namely, sup-formulas which express that for all $v_1$, \dots, $v_r$, if $[u_j,v_j] = 0$, then $v_1$, \dots, $v_m$ are freely independent; see \cite[Remark 4.5]{jekel2024upgradedfreeindependencephenomena}.  This could be taken as positive evidence of Conjecture \ref{intro conj: random optimization conjecture}.  We also point out a connection with free entropy theory in the work of the second author in \cite[Theorem E, Corollary F]{Jekel2026stochastic} (see also \cite[\S 6]{GJNP2025}), which relates free entropy with certain optimization formulas generalizing \eqref{eq: random optimization conjecture}.  The optimization problems there are stochastic in nature; the role of $U_j^{(N)}$ is played by a self-adjoint matrix Brownian motion, and the role of the control parameters $V_j$ is played by a control process adapted to a given noncommutative filtration.  An analog of Conjecture \ref{conj: random optimization conjecture} in this setting would imply that the value of matrix optimization problems converges as $N \to \infty$, and would yield as a consequence a complete large deviations principle for several independent GUE matrices by \cite[Corollary F]{Jekel2026stochastic}.

We now state our main result, which precisely relates the conjectures stated above with their operator algebraic counterparts, and with the Peterson--Thom conjecture. 

\begin{thm} \label{thm:intro main thm}
Fix an integer $r$.
\begin{enumerate}
    \item Conjecture \ref{intro conj: the conj} is equivalent to the statement there is a conull $\Omega\subseteq \prod_{N}\cU(\M_{N}(\C))^{r}$ so that for every $U=[U^{(N)}]_{N\in \N}\in \Omega$ so that for every $\omega\in \beta\N\setminus\N$ there is a weakly coarse trace-preserving embedding $\Theta_{U,\omega}\colon L(\F_{r})\to \prod_{N\to\omega}(\M_{N}(\C),\tr)$ satisfying that $\Theta_{U,\omega}(u_{j})=(U_{j,N})_{N\to\omega}$ for all $j=1,\cdots, r$. \label{item: intro item weak coarse equiv}
    \item Conjecture \ref{intro conj: random optimization conjecture} is equivalent to the statement that there is a conull equivalent to the statement there is a conull $\Omega\subseteq \prod_{N}\cU(\M_{N}(\C))^{r}$ so that for every $U=[U^{(N)}]_{N\in \N}\in \Omega$ and for every $\omega\in \beta\N\setminus\N$ there is an existential trace-preserving embedding $\Theta_{U,\omega}\colon L(\F_{r})\to \prod_{N\to\omega}(\M_{N}(\C),\tr)$.
    \item Conjecture \ref{intro conj: random optimization conjecture} implies Conjecture \ref{intro conj: the conj}. \label{item: intro conjecture bridge}
    \item Conjecture \ref{intro conj: the conj} implies the Peterson-Thom property for $L(\F_{r})$.  \label{item: intro item PT}
\end{enumerate}
\end{thm}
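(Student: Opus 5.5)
We treat the four items separately. Items (1) and (2) are translations between asymptotic statements about random matrices and exact statements in the ultraproduct. Items (3) and (4) are operator-algebraic arguments carried out inside $M:=\prod_{N\to\omega}(\M_N(\C),\tr)$, with $Q:=\Theta_{U,\omega}(L(\F_r))$.

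For (1), we identify $L^2(M)\ominus L^2(Q)$ with the $Q$-$Q$ bimodule $H$ of $\omega$-limits of bounded matrices asymptotically orthogonal to every $P(U^{(N)})$. Weak containment of $H$ in $\bigoplus L^2(Q)\otimes L^2(Q)$ is equivalent to the norm estimate
\[
\|\alpha(u\otimes 1,1\otimes u^{-1})\cdot\xi\|\le \|\alpha(u\otimes 1,1\otimes u^{-1})\|_{C^*(u)\otimes_{\min}C^*(u^{\op})}
\]
for all $\xi\in H$ and all $\alpha\in\C[\F_r]\otimes\C[\F_r]$. Here $C^*(u^{\op})\cong C^*(u)$ via $u\mapsto u^{-1}$ and transpose, and it suffices to take $\alpha$ with coefficients in $\mathbb{Q}[i]$, a countable set. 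The left action of $\alpha$ on a matrix $A$ is exactly $\alpha(U\otimes 1,1\otimes U^t)\#A$.

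Vectors of $H$ are represented by sequences $A_N$ with $\|A_N\|\le R$, $\|A_N\|_2\le1$, satisfying $\tr(A_N^*P(U^{(N)}))\to_\omega 0$ for all $P$. A diagonal/compactness argument over an exhaustion by finite sets $F$ shows that the $\sup_R\inf_F\limsup_N$ quantity in Conjecture \ref{intro conj: the conj} is at most the bound for every $\omega$ if and only if the inequality holds on $H$ for every $\omega$. Bounded vectors are dense, which justifies the $\sup_R$. The conull set $\Omega$ is then the intersection over countably many $\alpha$ of the conull sets from the conjecture, intersected with the Voiculescu conull set.

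For (2), the key step is a characterization of existentiality of $Q\le M$ with $Q\cong L(\F_r)$ separable. It holds iff for every $s$ and every self-adjoint $p$,
\[
\sup_{v\in\cU(M)^s}\tau(p(\Theta(u),v))=\sup_{v\in\cU(L(\F_r))^s}\tau(p(u,v)).
\]
This uses that existential types reduce to sup-formulas, that unitaries suffice by writing contractions as averages of unitaries inside $*$-polynomials, and a standard ultrapower/saturation argument to build $M_0\to Q^\omega$. The inequality $\ge$ is automatic.

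Łoś's theorem turns the left side into $\lim_\omega \sup_{V}\tr_N(p(U^{(N)},V))$. It remains to pass between an almost-sure statement for all $\omega$ and the statement about the expectation. For this we use concentration of measure on $\cU(N)^r$: the map $U\mapsto\sup_V\tr_N p(U,V)$ is Lipschitz in $\|\cdot\|_2$ with constant independent of $N$, so it concentrates around its mean with Gaussian tails of order $e^{-cN^2}$. Borel–Cantelli then gives almost sure equality of $\lim$ and $\lim \E$. Countably many $p$ with rational coefficients suffice by continuity.

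For (3), we show that existential implies weakly coarse. Given $\Theta$ existential, take $\xi\in H$ bounded and let $M_0=W^*(Q,\xi)$. Embed $M_0\to L(\F_r)^\omega$ extending the diagonal embedding; it is trace-preserving, so it preserves $E_Q$ and hence orthogonality to $L^2(Q)$. The vector $\xi$ thus lands in $L^2(L(\F_r)^\omega)\ominus L^2(L(\F_r))$, and the norms of $\alpha\cdot\xi$ are preserved. By Ding–Peterson \cite[Theorem 7.20]{DP22} (AO for $L(\F_r)$), that bimodule is weakly coarse, which gives the required norm bound. Combining with (1) and (2) yields the implication.

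For (4), we assume the Peterson–Thom failure: two distinct maximal amenable $P_1,P_2\supseteq A$ with $A$ diffuse. We then follow the strategy of \cite{HayesPT}: take the random embedding, which is weakly coarse by (1), and use the ultraproduct argument with weak coarseness replacing strong convergence of the tensor model. The mechanism is that an amenable $P\ge A$ forces vectors in $L^2(M)\ominus L^2(Q)$ almost commuting with $A$. Coarseness together with diffuseness of $A$ forbids these, which gives a 1-bounded entropy or amenable-absorption conclusion that forces $P_1\vee P_2$ amenable. The main obstacle is here: adapting Hayes' argument, which used $1$-bounded entropy and strong convergence in the tensor with an independent copy, to weak coarseness of the inclusion alone. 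I would first try to show directly that weak coarseness of $Q\le M$ gives \emph{amenable absorption}: any amenable $P\le M$ with $P\cap Q$ diffuse satisfies $P\subseteq Q$ up to the relevant intertwining. I would then conclude Peterson–Thom via the known reduction in \cite{HayesPT}.
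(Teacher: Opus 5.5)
There are genuine gaps in items (2)/(3), (1) and (4). Your concentration/Borel--Cantelli step in (2), the diagonal argument in (1), and the Ding--Peterson step in (3) do agree with the paper.

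\textbf{Items (2) and (3), and a step in (1).} In (2) you assert that $Q\le M$ (with $Q\cong L(\F_r)$) is existential as soon as $\sup_{v\in\cU(M)^s}\tau(p(u,v))=\sup_{v\in\cU(L(\F_r))^s}\tau(p(u,v))$ for all self-adjoint $p$. This is false in general. Let $Q=L(\F_r)$, $r\ge 2$, sit diagonally in $M=Q\oplus Q$ with trace $\frac12(\tau\oplus\tau)$. Then $\tau_M(p(u\oplus u,v\oplus v'))=\frac12\tau(p(u,v))+\frac12\tau(p(u,v'))$, so all these suprema agree. Yet an embedding $M\to Q^\omega$ over $Q$ would put a projection of trace $\frac12$ in $Q'\cap Q^\omega=\C$, so $Q\le M$ is not existential. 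Linear sup-formulas only see the closed convex hull of the laws realizable over $L(\F_r)$. Existential formulas, by contrast, have quantifier-free parts that are continuous functions of several traces, and ``existential types reduce to sup-formulas'' does not bridge this.

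The paper closes the gap with two extra ingredients:
\begin{enumerate}
\item The random embedding is irreducible: $\Theta_{U,\omega}(L(\F_r))'\cap\prod_{N\to\omega}\M_N(\C)=\C$ (e.g.\ from Hastings' spectral gap). Hence each intermediate $W^*(u,w)$ is a factor, and the law $\sigma_0$ of $(u,w)$ is an extreme point of $\mathcal{T}(C^*_u(\F_{r+s}))$.
\item Lemma~\ref{lem: extreme points} then gives single polynomials $p_k$ whose half-spaces $\{\sigma(p_k)>\sigma_0(p_k)-\varepsilon\}$ lie in shrinking neighborhoods of $\sigma_0$. Feeding these $p_k$ into the conjecture produces $v^{(k)}\in\cU(L(\F_r))^s$ with laws converging to $\sigma_0$.
\end{enumerate}
Without this, ``Conjecture~\ref{intro conj: random optimization conjecture} $\Rightarrow$ existential'' is unproved, and so is your (3), which relies on it.

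Separately, in (1) the bound for $\alpha\in\C[\F_r]\otimes\C[\F_r]$ only gives weak containment as $C^*_\lambda(\F_r)$-bimodules. Upgrading to $L(\F_r)$-bimodules is not automatic, because Kaplansky approximants do not control $\min$-norms. The paper uses local reflexivity of the exact algebra $C^*_\lambda(\F_r)$ (Proposition~\ref{prop: folklore prop}) to get u.c.p.\ approximations that do. Item (4) needs exactly this direction.

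\textbf{Item (4).} This item is not actually proved, and the mechanism you sketch fails: nothing forbids $A$-central vectors in $L^2(M)\ominus L^2(Q)$ for amenable $A$. Let $D_N$ be the diagonal algebra of $U_1^{(N)}$ in an eigenbasis, and let $P=\prod_{N\to\omega}D_N$. Then $P$ is abelian, contains $A=W^*(\Theta_{U,\omega}(u_1))$, and is non-separable. So $P\not\subseteq Q$, and every $x-E_Q(x)$ with $x\in P$ is an $A$-central vector orthogonal to $L^2(Q)$. In particular, the absorption statement you propose to aim for is false. More generally, weak coarseness is vacuous over amenable subalgebras, since every bimodule over an amenable algebra is weakly coarse.

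The missing idea is to test a subalgebra $Q_0\le L(\F_r)$ against two \emph{independent} random embeddings:
\begin{enumerate}
\item Apply weak coarseness to the $2r$-tuple $(U,V)$. The summand $\ell^2(\F_{2r})$ is coarse over the two copies of $L(\F_r)$, because $\F_r\times\F_r$ acts freely on $\F_{2r}$. Its orthocomplement is weakly coarse by hypothesis.
\item Hence $L^2(\prod_{N\to\omega}\M_N(\C))$ is weakly coarse when $Q_0$ acts on the left through $\Theta_{U,\omega}$ and on the right through $\Theta_{V,\omega}$.
\item If $h(Q_0:L(\F_r))=0$, Hayes' random Jung theorem (Corollary~\ref{cor: random jung collapse}) makes $\Theta_{U,\omega}|_{Q_0}$ and $\Theta_{V,\omega}|_{Q_0}$ almost surely unitarily conjugate.
\item The conjugating unitary is a $Q_0$-central tracial vector, so $L^2(Q_0)$ is weakly coarse and $Q_0$ is amenable.
\end{enumerate}
Thus nonamenable subalgebras have positive $1$-bounded entropy, which yields Peterson--Thom by \cite[Proposition 2.7]{HayesPT}.
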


The main ingredient in the proof that Conjecture \ref{intro conj: random optimization conjecture} implies Conjecture \ref{intro conj: the conj} is Ding and Peterson's result, which says that the embedding of $L(\F_r)$ into $L(\F_r)^\omega$ is coarse.  If the embedding of $L(\F_r)$ into $\prod_{n \to \omega} \mathbb{M}_N$ is existential, then the coarseness passes to the inclusion $L(\F_r) \subseteq \prod_{n \to \omega} \mathbb{M}_N$, which yields Conjecture \ref{intro conj: the conj}.

\subsection*{Acknowledgements}

The authors thank Charles Bordenave, Beno{\^\i}t Collins,  Jorge Garza-Vargas, F{\'e}lix Parraud, and Ramon van Handel for insightful discussions. Conversations at the IPAM conference ``Free Entropy Theory and Random Matrices" were inspirational for this work.

\subsection*{Funding}

DJ was supported by an EU Horizon Marie Sk{\l}odowska Curie Action, FREEINFOGEOM, grant id: 101209517.  Views expressed in this paper are those of the authors, not of the funding agencies. SKE was supported by the NSF grant DMS-2611847. BH acknowledges support from NSF CAREER award DMS-21447.


\subsection*{AI statement} 

No AI tools were used by the authors in any stage in the process of carrying out this research project. 


\section{Preliminaries} \label{sec:preliminaries}

\subsection*{Ultraproducts of von Neumann algebras, and bimodules}

By definition, a \emph{tracial von Neumann algebra} is a pair $(M,\tau)$ where $M$ is a von Neumann algebra and $\tau\colon M\to \C$ is a faithful, normal, tracial state.

Ultraproducts of von Neumann algebras will be useful throughout the paper.
\begin{defn}
Let $\omega$ be a free ultrafilter on $\N,$ and let $(M_{k},\tau_{k})_{k=1}^{\infty}$ be a sequence of tracial von Neumann algebras. We define their \emph{tracial ultraproduct with respect to $\omega$} by
\[\prod_{k\to\omega}(M_{k},\tau_{k})=\frac{\{(x_{k})_{k}\in \prod_{k}M_{k}:\sup_{k}\|x_{k}\|_{\infty}<\infty\}}{\{(x_{k})_{k}\in \prod_{k}M_{k}:\sup_{k}\|x_{k}\|_{\infty}<\infty,\mbox{ and}\lim_{k\to\omega}\|x_{k}\|_{L^{2}(\tau_{k})}=0\}}.\]
If $(x_{k})_{k}\in \prod_{k}M_{k}$ and $\sup_{k}\|x_{k}\|_{\infty}<\infty,$ we let $(x_{k})_{k\to\omega}$ be the image of $(x_{k})_{k}$ under the quotient map.
If $J$ is an index set, and $(x_{k})_{k}\in \prod_{k}M_{k}^{J}$ and
\[\sup_{k}\|x_{k,j}\|_{\infty}<\infty\mbox{ for all $j\in J$},\]
then we let $(x_{k})_{k\to\omega}\in \left(\prod_{k\to\omega}(M_{k},\tau_{k})\right)^{J}$ be the tuple whose j$^{th}$ coordinate is $(x_{k,j})_{k\to\omega}.$
\end{defn}

As is well known,  $\prod_{k\to\omega}(M_{k},\tau_{k})$ is a tracial von Neumann algebra with the $*$-algebra operations defined pointwise and the trace given by $\tau_{\omega}((x_{k})_{k\to\omega})=\lim_{k\to\omega}\tau_{k}(x_{k})$.

We recall the following definitions about bimodules over von Neumann algebras (see for instance \cite[Appendix F]{BrownOzawa2008}).  For von Neumann algebras $M$ and $N$, an \emph{$M$-$N$-bimodule} is a Hilbert space $H$ equipped with a normal left action of $M$ and right action of $N$. For a von Neumann algebra $M$ with faithful normal state $\varphi$, the \emph{trivial bimodule} $L^2(M,\varphi)$ (often denoted simply $L^2(M)$) is the GNS space equipped with the left and right actions $x \cdot \xi \cdot y = x JyJ \xi$ where $J$ is the modular conjugation operator. \emph{The coarse $M$-$N$ bimodule} is the bimodule $L^2(M) \otimes_{\C} L^2(N)$.  More generally, an $M$-$N$-bimodule $H$ is said to be \emph{coarse} if $M$ embeds into a direct sum of copies of $L^2(M) \otimes_{\C} L^2(N)$. Let $\cH,\cK$ be $M$-$N$ bimodules and let $\pi_{\cH}\colon M\otimes_{\textnormal{alg}}N^{\op}\to B(\cH)$ be given by $\pi_{\cH}(x\otimes y^{\op})\xi=x\xi y$, and similarly define $\pi_{\cK}$. We say that $\cH$ is \emph{weakly contained in} $\cK$ if 
\[
\|\pi_{\cH}(a)\|\leq \|\pi_{\cK}(a)\|, \textnormal{ for all $a\in M\otimes_{\textnormal{alg}}N^{\op}$}
\]
 (see \cite[Section~13.3.2]{anantharaman-popa}).  
%
In particular, an $M$-$N$ bimodule is \emph{weakly coarse} if it is weakly contained in the coarse bimodule.  Note that weak containment is transitive, so in particular any bimodule that is weakly contained in a weakly coarse bimodule is also weakly coarse.

\subsection*{Haar random unitaries, concentration, and asymptotic freeness}

For $N \in \N$, we denote by $\mathbb{M}_N(\mathbb{C})$ the space of $N \times N$ complex matrices.  We denote by $\tr$ (or $\tr_N$) the normalized trace $(1/N) \Tr$.  In particular $(\mathbb{M}_N(\mathbb{C}),\tr)$ is a tracial von Neumann algebra.

For $N,r\in \N$, note that $\cU(\M_{N}(\C))^{r}$ is a compact group, and thus has a unique Haar probability measure, which is simply the $r$-fold product of the Haar measure on $\cU(\M_{n}(\C))$. For $i=1,\cdots,r$ we let $U_{i}^{(N)}\colon \cU(\M_{N}(\C))^{r}\to \cU(\M_{N}(\C))$ be the $i^{th}$ coordinate functions. Thinking of $U_{1}^{(N)},\cdots,U_{r}^{(N)}$ as $\cU(\M_{n}(\C))$-valued random variables, they are independent and each has distribution the Haar measure on $\cU(\M_{N}(\C))$. We will thus often refer to $U_{1}^{(N)},\cdots,U_{r}^{(N)}$ as independent Haar random unitaries. 

Suppose that $X$ is a Polish space and $d$ is a complete metric compatible with the topology of $d$, and $\mu$ a Borel probability measure on $X$. For $\varepsilon>0$, we define the \emph{concentration function} by 
\[\alpha_{X,d,\mu}(\varepsilon)=\sup\left\{\mu(X\setminus N_{\varepsilon}(A,d)):A\subseteq X \text{ is Borel and }\mu(A)\geq 1/2\right\}),\]
where $N_{\varepsilon}(A,d)=\bigcup_{x\in A}\{y\in X:d(x,y)<\varepsilon\}$. 
It will be helpful to recall the standard concentration of measure estimates for the unitary group (see \cite[Theorem 5.3]{Ledoux2001} and \cite[Theorem 17]{Meckes2013}).

\begin{thm} \label{thm: unitary concentration}
Fix $N\in \N$.
\begin{enumerate}[(i)]
\item for every Lipschitz function $f\colon \cU(\M_{N}(\C))^{r}\to \R$ we have that 
\[\P(|f(U)-\E[f(U)]|>\varepsilon)\leq 2e^{-\frac{N^{2}\varepsilon^{2}}{12 \|f\|_{\operatorname{Lip}}^{2}}},\]
where $\|f\|_{Lip}$ is the Lipschitz seminorm of $f$ with respect to the normalized Hilbert-Schmidt distance. 
\item We have that $\alpha_{\cU(\M_{N}(\C))^{r},\|\cdot\|_{2},\mu_{N}}(\varepsilon)\leq e^{-\varepsilon^{2}N^{2}/48}$, where $\mu_{N}$ is the Haar measure. 
\end{enumerate}
\end{thm}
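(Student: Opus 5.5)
The plan is to derive both statements from a logarithmic Sobolev inequality for Haar measure on $\cU(\M_{N}(\C))^{r}$ with respect to the normalized Hilbert--Schmidt metric, and then apply the Herbst argument. First I would work on $SU(N)$ with the Riemannian metric induced by the Hilbert--Schmidt inner product $\Tr(X^{*}Y)$ on the Lie algebra. There the Ricci curvature is bounded below by $N/2$, so the Bakry--\'Emery criterion (as in \cite[Theorem 5.3]{Ledoux2001}) gives a log-Sobolev inequality with constant of order $1/N$ in the unnormalized metric. Passing to the normalized metric $\|\cdot\|_{2}=N^{-1/2}\|\cdot\|_{HS}$ rescales this constant to order $1/N^{2}$.

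The main obstacle is that $\cU(\M_{N}(\C))$ itself has a flat circle direction, so it has no positive Ricci lower bound. Following \cite[Theorem 17]{Meckes2013}, I would use the coupling $SU(N)\times[0,1)\to \cU(N)$, $(V,\theta)\mapsto e^{2\pi i\theta/N}V$. This map pushes Haar measure times Lebesgue measure forward to Haar measure on $\cU(N)$. The circle factor carries Lebesgue measure with an $O(1/N^{2})$-scaled metric, and a standard log-Sobolev (or Poincar\'e-based) estimate on the circle holds there with constant of order $1/N^{2}$ as well. Tensorizing then gives a log-Sobolev inequality on $SU(N)\times[0,1)$ with constant $C/N^{2}$. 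I would then check that the covering map is Lipschitz, with an explicit absolute constant, from the product metric to $\|\cdot\|_{2}$. Since log-Sobolev inequalities pass to Lipschitz images, $\cU(N)$ inherits such an inequality with constant $C'/N^{2}$. Tracking the constants carefully is needed to land on $12$.

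Next, tensorization of log-Sobolev inequalities extends the inequality to $\cU(\M_{N}(\C))^{r}$ with the same constant, using the $\ell^{2}$-sum of the normalized Hilbert--Schmidt distances. The Herbst argument then gives
\[
\P\bigl(f-\E f>\varepsilon\bigr)\leq e^{-N^{2}\varepsilon^{2}/(12\|f\|_{\operatorname{Lip}}^{2})},
\]
and applying the same bound to $-f$ yields (i).

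For (ii), take a Borel $A$ with $\mu_{N}(A)\geq 1/2$ and set $f(x)=d(x,A)$, which is $1$-Lipschitz. Since $f=0$ on $A$, applying (i) to $-f$ bounds $\E f$: either $\E f\leq \varepsilon/2$, or
\[
\tfrac12\leq \mu_{N}\bigl(f\leq \E f-\E f\bigr)\leq 2e^{-N^{2}(\E f)^{2}/12}.
\]
Combining this with the upper tail $\P(f\geq\varepsilon)\leq \P(f-\E f\geq \varepsilon/2)$ gives $\mu_{N}(X\setminus N_{\varepsilon}(A))\leq e^{-\varepsilon^{2}N^{2}/48}$ for $\varepsilon$ beyond a threshold of order $1/N$. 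Below that threshold the claimed bound exceeds $1/2$, so it holds trivially. The halving of $\varepsilon$ is what converts the $12$ into $48$.
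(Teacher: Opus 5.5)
Your strategy (Bakry--\'Emery on $SU(N)$, transfer to $U(N)$ through the phase parametrization, tensorization over the $r$ factors, Herbst, then (ii) from (i)) is the standard argument behind the results the paper cites. The paper itself gives no proof beyond those citations. The approach is right, but the constants are the content of the statement, and your deduction of (ii) is incorrect as written.

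You used the two-sided bound (i), with its factor $2$, on the lower tail. From $\tfrac12\le 2e^{-N^2(\E f)^2/12}$ you only get $\E f\le\sqrt{12\ln 4}/N$, so your threshold is $\varepsilon_0=\sqrt{48\ln 4}/N$. There $e^{-\varepsilon_0^2N^2/48}=\tfrac14$, not something exceeding $\tfrac12$. The trivial estimate $\mu_N(X\setminus N_\varepsilon(A))\le\mu_N(X\setminus A)\le\tfrac12$ therefore leaves the range $\sqrt{48\ln 2}<\varepsilon N<\sqrt{48\ln 4}$ uncovered. The fix is to use the one-sided Herbst bound $\P(g-\E g\ge t)\le e^{-N^2t^2/12}$, which you have before symmetrizing, with $g=-f$. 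Then $\tfrac12\le\mu_N(f=0)\le e^{-N^2(\E f)^2/12}$, so $\E f\le\sqrt{12\ln 2}/N$. The threshold becomes $\sqrt{48\ln 2}/N$, which is exactly where $e^{-\varepsilon^2N^2/48}=\tfrac12$, so the two regimes fit together. The upper-tail step must likewise be one-sided.

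There are two further points in (i). First, the $\theta$-factor must be an interval, not a circle. The map $\theta\mapsto e^{2\pi i\theta/N}V$ is not $1$-periodic, since $e^{2\pi i/N}V\ne V$, so the parametrization is not Lipschitz from a circle. Use the uniform measure on an interval of length $\ell$, whose log-Sobolev constant is $\ell^2/\pi^2$ by even reflection to the circle of length $2\ell$. Here the inequality is written $\operatorname{Ent}(h^2)\le 2C\,\E|\nabla h|^2$, so Herbst gives $e^{-t^2/(2CL^2)}$. A Poincar\'e inequality alone would only give exponential, not Gaussian, tails.

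Second, tracking the constants genuinely matters. In the normalized metric, $SU(N)$ has $C=2/N^2$, and the interval has length $2\pi/N$, so $C=4/N^2$. The crude bound $\|e^{2\pi i\theta/N}V-e^{2\pi i\theta'/N}V'\|_2\le\frac{2\pi}{N}|\theta-\theta'|+\|V-V'\|_2$, with a $\sqrt2$ Cauchy--Schwarz loss, gives $16$, not $12$. You can repair this in either of two ways:
\begin{enumerate}
\item Rescale the interval metric so its constant is also $2/N^2$. The loss becomes $\sqrt3$, which yields exactly $12$.
\item Note that $i\R 1\perp\mathfrak{su}(N)$ for the trace inner product. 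The parametrization is then a local isometry, Haar measure on $U(N)$ has $C=4/N^2$, and you even get $8$.
\end{enumerate}
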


One case of Voiculescu's asymptotic freeness theorem describe the large-$N$ behavior of $N \times N$ independent Haar random unitaries $U_1^{(N)}$, \dots, $U_r^{(N)}$.

First, let us recall the notion of convergence in non-commutative law.  Let $\mathbb{C} \mathbb{F}_r$ be the group algebra of $\mathbb{F}_r$. Given a tuple of unitaries $u_1$, \dots, $u_r$ in any tracial von Neumann algebra $(M,\tau)$ (including for instance $\mathbb{M}_N(\mathbb{C})$), one can evaluate any element $p \in \mathbb{C} [\mathbb{F}_r]$ on $u$ since there is a unique representation from $\mathbb{F}_r$ to $\mathcal{U}(M)$ sending the $j$th generator to $u_j$.  The \emph{non-commutative distribution} of $u = (u_1,\dots,u_r)$ is the linear functional $ell\: \mathbb{C} [\mathbb{F}_r] \to \C$ given by $\ell(p) = \tau(p(u_1,\dots,u_r))$, which is a tracial state.  The non-commutative distribution can also equivalently be viewed as a tracial state on the universal group $\mathrm{C}^*$-algebra $\mathrm{C}_u^*(\mathbb{F}_r)$ which contains $\mathbb{C} [\mathbb{F}_r]$ as a dense subalgebra.

The space of tracial states on can naturally be equipped with the weak-$*$ topology $\mathrm{C}_u^*(\mathbb{F}_r)$, and convergence of some sequence in non-commutative distribution refers to convergence of the corresponding tracial states in the weak-$*$ topology.  More explicitly, given a unitary tuples $u^{(k)} = (u_j^{(k)})_{j=1}^r$ for $k \in \N$ from a tracial von Neumann algebra $(M_k,\tau_k)$, and another $u = (u_j)_{j=1}^r$ from $(M,\tau)$, convergence in distribution means that $\tau_k(p(u^{(k)})) \to \tau(p(u))$ for every $p \in \mathbb{C} [\mathbb{F}_r]$.  Convergence in non-commutative distribution is closely related to embeddings into ultraproducts as well. Given an ultrafilter $\omega$ on $\mathbb{N}$, we have that $u^{(k)}$ converges in distribution to $u$ as $k \to \omega$ if and only if there exists a trace-preserving embedding $\iota$ from $(\mathrm{W}^*(u),\tau)$ into $\prod_{k \to \omega} (M_k,\tau_k)$ such that $\iota(u) = (u^{(k)})_{k \to \omega}$ (for details, see e.g. \cite[Lemma 5.10]{GJNS2022}).

Voiculescu's asymptotic freeness theorem \cite[Theorem 3.8]{VoicAsyFree} shows that Haar random unitaries converge in non-commutative distribution to the generators $u_1$, \dots, $u_r$ of the free group von Neumann algebra $L(\mathbb{F}_r)$ with the group trace $\tau_{\mathbb{F}_r}$.  We can thus restate this result in terms of ultraproduct embeddings as follows.

\begin{thm}
 Fix $r\in \N$, and let $U^{(N)}=(U^{(N)}_{i})_{i=1}^{r}$ be an $r$-tuple of i.i.d Haar distributed $N\times N$ unitary random matrices. Let $u=(u_{j})_{j=1}^{r}$ be an $r$-tuple of free Haar unitaries, and view $L(\F_{r})=W^{*}(u)$.  Then, for almost every $[U^{(N)}]_{N\in \N}\in \prod_{N}\cU(\M_{N}(\C)^{r})$ we have that for every free ultrafilter $\omega$, there is a (necessarily unique) embedding 
 \[\Theta_{U,\omega}\colon L(\F_{r})\to \prod_{N\to\omega}\M_{N}(\C)\]
 satisfying that $\Theta_{U}(u_{j})=(U_{j}^{(N)})_{N\to\omega}.$
\end{thm}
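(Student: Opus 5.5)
The plan is the standard two-step argument: first establish convergence in expectation of the moments, then upgrade to almost sure convergence by concentration and Borel--Cantelli, and finally translate the almost sure convergence into the ultraproduct embedding via the equivalence recalled before the statement (\cite[Lemma 5.10]{GJNS2022}).

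\emph{Convergence in expectation.} I take Voiculescu's theorem \cite[Theorem 3.8]{VoicAsyFree} as the input here. For every $p\in\C[\F_r]$ it gives
\[
\E[\tr_N(p(U^{(N)}))]\to\tau(p(u)).
\]
Concretely, for a reduced word $w\neq e$ one has $\E\tr_N(w(U^{(N)}))\to 0$, via Weingarten calculus or by writing Haar unitaries as polar parts of Ginibre matrices.

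\emph{Almost sure convergence.} Fix a word $w$ of length $\ell$ and set $f(U)=\re\tr_N(w(U))$, and similarly for the imaginary part. Using $|\tr_N(X)|\le\|X\|_2$ and a telescoping argument on the factors of $w$, I get
\[
|\tr_N(w(U))-\tr_N(w(V))|\le \|w(U)-w(V)\|_2\le \ell\max_j\|U_j-V_j\|_2 .
\]
So $f$ is $\ell$-Lipschitz (up to a factor $\sqrt r$ depending on the product metric) for the normalized Hilbert--Schmidt distance. Theorem \ref{thm: unitary concentration}(i) then gives
\[
\P\bigl(|f-\E f|>\varepsilon\bigr)\le 2e^{-N^2\varepsilon^2/(12 r\ell^2)}.
\]
This is summable in $N$, so Borel--Cantelli yields $\tr_N(w(U^{(N)}))-\E\tr_N(w(U^{(N)}))\to 0$ almost surely. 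There are only countably many words $w$ and rational $\varepsilon$, so intersecting the corresponding full-measure sets gives a conull $\Omega$ on which $\tr_N(p(U^{(N)}))\to\tau(p(u))$ for all $p\in\C[\F_r]$, by linearity.

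\emph{Embedding.} For $U\in\Omega$ and any free ultrafilter $\omega$, the convergence along $N\to\omega$ follows from the ordinary limit. The cited equivalence then produces a trace-preserving embedding $W^*(u)=L(\F_r)\to\prod_{N\to\omega}\M_N(\C)$ sending $u_j\mapsto (U_j^{(N)})_{N\to\omega}$. Concretely, one defines $\Theta$ on $\C[\F_r]$ by $p\mapsto (p(U^{(N)}))_{N\to\omega}$. This map is trace preserving, hence $\|\cdot\|_2$-isometric, and the operator norm bound $\|p(U^{(N)})\|\le\sum|p_g|$ makes it well defined into the ultraproduct. Since $\tau$ is faithful on $L(\F_r)$, $\Theta$ extends to a normal $*$-embedding on the weak closure, via a Kaplansky density argument in the standard way.

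\emph{Uniqueness.} The embedding is unique because it is determined on the weakly dense $*$-algebra $\C[\F_r]$ and normal $*$-homomorphisms are $\|\cdot\|_2$-continuous on bounded sets.

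\emph{Main obstacle.} The only substantive input is the expectation convergence from Voiculescu's theorem, which I am treating as cited. The step that needs care is making the Lipschitz constant uniform in $N$, so that the Gaussian bound really is summable.
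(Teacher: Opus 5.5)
Your argument is correct and follows the paper's route. The paper gives no separate proof: it restates Voiculescu's asymptotic freeness theorem \cite[Theorem 3.8]{VoicAsyFree} through the equivalence between convergence in distribution and ultraproduct embeddings \cite[Lemma 5.10]{GJNS2022}, and your concentration-plus-Borel--Cantelli step simply makes explicit the upgrade from convergence in expectation to almost sure convergence, on a conull set independent of $\omega$, that the paper folds into that citation.
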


We caution that the map $U \mapsto \Theta_{U,\omega}$ is not measurable in any reasonable sense (see e.g.\ \cite[\S 6]{GJelementarydisintegration}), and thus one has to be slightly careful when defining sets based on the behavior of $\Theta_{U,\omega}$.  However, all the properties that we will be concerned with in this paper can be reduced to measurable properties of the sequence $(U^{(N)})_{N \in \mathbb{N}}$, as one can see from the proofs.

\section{A random Akemann-Ostrand property}\label{sec: Ayo AO}

 We recall the key $C^*$-algebraic property established by Akemann and Ostrand in \cite{AOProp} that the $*$-homomorphism 
\[
C^*_\lambda (\mathbb F_2) \otimes C^*_\rho (\mathbb F_2) \ni \sum_{i = 1}^n a_i \otimes x_i \mapsto \sum_{i = 1}^n a_i x_i \in \mathbb B(\ell^2 (\mathbb F_2)),
\] 
although not continuous with respect to the minimal tensor product, becomes continuous once it is composed with the quotient map onto the Calkin algebra $\mathbb B(\ell^2 (\mathbb F_2 )) / \mathbb K(\ell^2 (\mathbb F_2))$. A strengthened form of this property was established in the work of Ding-Peterson \cite[Theorem 7.20 (3)]{ding2023biexactvonneumannalgebras} (independently obtained also by Marrakchi in \cite[Corollary 3.7]{marrakchi2025kadison}): $$_{L(\mathbb{F}_n)}[L^2(L(\mathbb{F}_n)^\omega)\ominus L^2(L(\mathbb{F}_n))]_{L(\mathbb{F}_n)}\prec  _{L(\mathbb{F}_n)}[L^2(L(\mathbb{F}_n))\otimes L^2(L(\mathbb{F}_n))]_{L(\mathbb{F}_n)}.$$
The above result is the main inspiration behind the present article. In particular, the following Proposition is immediate:

\begin{prop}\label{prop:DingPeterson}
Suppose that the embedding $\Theta_{U,\omega}: L(\mathbb{F}_r) \to \prod_{N\to \omega}\mathbb{M}_N(\mathbb{C})$ is existential.
Then
\[
_{L(\mathbb{F}_r)}\left[L^2\left(\prod_{N\to \omega}\mathbb{M}_N(\mathbb{C})\right)\ominus L^2(L(\mathbb{F}_r)) \right]_{L(\mathbb{F}_r)}\prec _{L(\mathbb{F}_r)}[L^2(L(\mathbb{F}_r))\otimes L^2(L(\mathbb{F}_r))]_{L(\mathbb{F}_r)},
\]
where on the left-hand side we identify $L(\mathbb{F}_r)$ with $\Theta_{U,\omega}(L(\mathbb{F}_r))$.
\end{prop}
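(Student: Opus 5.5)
The plan is to transfer the Ding--Peterson weak containment from the diagonal inclusion $L(\F_r)\subseteq L(\F_r)^\omega$ to $\Theta_{U,\omega}(L(\F_r))\subseteq M:=\prod_{N\to\omega}\M_N(\C)$, using the existential embedding. Write $Q=L(\F_r)$, identified with $\Theta_{U,\omega}(Q)$. Weak containment is a statement about norms $\|\pi_{\cH}(a)\|$ for $a\in Q\otimes_{\mathrm{alg}}Q^{\op}$. Each such norm is a supremum over vectors, and each vector together with finitely many elements $a$ lives in a separable piece of $M$. So it suffices to show that for every separable $M_0$ with $Q\le M_0\le M$, the bimodule $L^2(M_0)\ominus L^2(Q)$ is weakly contained in the coarse bimodule.

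First I reduce to separable pieces. Given $a\in Q\otimes_{\mathrm{alg}} Q^{\op}$ and $\xi\in L^2(M)\ominus L^2(Q)$ with $\|\pi(a)\xi\|$ close to $\|\pi(a)\|$, approximate $\xi$ in $L^2$ by an element $x\in M$. Take $M_0=W^*(Q,x)$, which is separable because $Q$ has separable predual. Let $E_Q$ be the trace-preserving conditional expectation onto $Q$. Then $x-E_Q(x)\in L^2(M_0)\ominus L^2(Q)$ approximates $\xi$, because $\xi\perp L^2(Q)$ and so $\|(x-E_Q(x))-\xi\|_2\le \|x-\xi\|_2$. Since $\pi(a)$ is bounded, we get $\|\pi_{L^2(M)\ominus L^2(Q)}(a)\|=\sup_{M_0}\|\pi_{L^2(M_0)\ominus L^2(Q)}(a)\|$.

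Second, I apply existentiality. By the characterization recalled in the introduction, there is a trace-preserving embedding $\theta\colon M_0\to Q^\omega$ extending the diagonal embedding of $Q$. Because $\theta$ is trace-preserving, it induces an isometry $L^2(M_0)\to L^2(Q^\omega)$, and this isometry is a $Q$-$Q$ bimodule map since $\theta|_Q$ is the diagonal inclusion. It carries $L^2(Q)$ onto the diagonal copy of $L^2(Q)$, so by orthogonality it maps $L^2(M_0)\ominus L^2(Q)$ into $L^2(Q^\omega)\ominus L^2(Q)$. Hence $L^2(M_0)\ominus L^2(Q)$ is a sub-bimodule of $L^2(Q^\omega)\ominus L^2(Q)$, and in particular is weakly contained in it.

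Third, I invoke Ding--Peterson \cite[Theorem 7.20 (3)]{ding2023biexactvonneumannalgebras}: the bimodule $L^2(Q^\omega)\ominus L^2(Q)$ is weakly contained in $L^2(Q)\otimes L^2(Q)$. Transitivity of weak containment then gives the bound for each $M_0$, and the supremum from the first step gives the statement for all of $M$.

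The only subtle point is that $M$ is nonseparable, so existentiality applies only to separable intermediate algebras. The norm-approximation argument in the first step handles this. One should also confirm that Ding--Peterson's result holds for the ultrafilter $\omega$ used in the existential characterization. It is stated for an arbitrary free ultrafilter, so the same $\omega$ can be used.
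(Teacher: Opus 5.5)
Your proposal is correct and follows essentially the same route as the paper. Both arguments reduce to separable intermediate algebras $Q\le M_0\le \prod_{N\to\omega}\M_N(\C)$, use the existential embedding $M_0\to Q^\omega$ to realize $L^2(M_0)\ominus L^2(Q)$ inside $L^2(Q^\omega)\ominus L^2(Q)$, and conclude by Ding--Peterson together with transitivity of weak containment. You additionally spell out the vector-approximation argument for the separable reduction and the orthogonality argument for the bimodular isometry, both of which the paper leaves implicit.
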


\begin{proof}
Assume the embedding is existential.  Note that to prove $_{L(\mathbb{F}_r)}\left[L^2\left(\prod_{N\to \omega}\mathbb{M}_N(\mathbb{C})\right)\ominus L^2(L(\mathbb{F}_r)) \right]_{L(\mathbb{F}_r)}$ is weakly coarse, it suffices to prove that for every separable $M_0$ with $\Theta_{U,\omega}(L(\mathbb{F}_r)) \leq M_0 \leq \prod_{N\to \omega}\mathbb{M}_N(\mathbb{C})$, the bimodule $_{L(\mathbb{F}_r)}\left[L^2(M_0)\ominus L^2(L(\mathbb{F}_r)) \right]_{L(\mathbb{F}_r)}$ is weakly coarse.  By assumption, there exists an embedding of $M_0$ into $L(\mathbb{F}_r)^\omega$ extending the diagonal embedding of $L(\mathbb{F}_r)$.  Thus,
\[
_{L(\mathbb{F}_r)}\left[L^2(M_0)\ominus L^2(L(\mathbb{F}_r)) \right]_{L(\mathbb{F}_r)} \prec _{L(\mathbb{F}_r)}\left[L^2(L(\mathbb{F}_r)^\omega) \ominus L^2(L(\mathbb{F}_r)) \right]_{L(\mathbb{F}_r)},
\]
which is weakly coarse by the result of Ding and Peterson \cite[Theorem 7.20 (3)]{ding2023biexactvonneumannalgebras}.
\end{proof}

The above Proposition motivates a \emph{random AO property} for free groups. The following conjecture is equivalent to saying that for every free ultrafilter, the random embedding gives a weakly orthocomplement bimodule (see Theorem \ref{thm: weak coarseness equivalent}). As we show later, this conjecture is sufficient to imply the Peterson--Thom property for $L(\F_{r})$. 

\begin{conj}\label{conj: the conj}
	\label{scmlskmc}
	Given $\alpha\in \C[\F_r]\otimes\C[\F_r]$, $u$ a $r$-tuple of free Haar unitaries, for almost every $[U^{(N)}]_N \in \prod_{N}\cU(\cM_{N}(\C)^{r})$ the following estimate holds:
	$$ \sup_{R>0} \inf_{F\subset \C[\F_r] \text{ finite}}\ \limsup_{N\to\infty} \sup_{\substack{A\in\M_N(\C), \norm{A}\leq R, \norm{A}_2\leq 1,\\ \forall P\in F, \tr(A^*P(U^N))=0}} \norm{\alpha(U^{(N)}\otimes 1_N, 1_N\otimes (U^{(N)})^{t}) \# A }_2 $$
    $$\leq \norm{\alpha(u\otimes 1, 1\otimes u^{-1})}_{C^*_{\lambda}(\F_r)\otimes_{\min} C^*_{\lambda}(\F_r) },$$
	where $(B\otimes C)\#A = BAC^T$.
\end{conj}

We will show that Conjecture \ref{conj: the conj} implies that the random embedding of $L(\F_{n})$ into ultraproduct of matrices is weakly coarse, for which we use the following well known proposition.

\begin{prop}\label{prop: folklore prop}
Let $M$ be a von Neumann algebra $A$ a unital weak$*$-dense $C^{*}$-subalgebra. Assume that $A$ is locally reflexive. Suppose that $\cH$ is a normal $M$-$M$ bimodule. If 
\[_A \cH_A\prec _A L^{2}(M)\otimes L^{2}(M)_A,\]
then
\[_M \cH_M\prec _M L^{2}(M)\otimes L^{2}(M)_M.\]
\end{prop}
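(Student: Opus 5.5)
Weak containment of bimodules can be recast as a statement about states on tensor products, and we will work in that language. For a normal $M$-$M$ bimodule $\cH$, weak containment ${}_M\cH_M\prec {}_M L^2(M)\otimes L^2(M)_M$ is equivalent to continuity of the representation $\pi_{\cH}\colon M\odot M^{\op}\to B(\cH)$ with respect to the \emph{normal} (binormal) minimal tensor norm. Concretely, $\pi_{\cH}$ should extend to $M\bar\otimes M^{\op}$ restricted to the image of $M\otimes_{\min}M^{\op}$, or equivalently satisfy
\[
\|\pi_{\cH}(x)\|\leq \|x\|_{M\otimes_{\min} M^{\op}}\quad\text{for all } x\in M\odot M^{\op}.
\]
Indeed, $\pi_{L^2(M)\otimes L^2(M)}$ is the standard (faithful) representation of $M\bar\otimes M^{\op}$. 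Hence it is isometric on $M\odot M^{\op}$ for the minimal norm, because the minimal norm is computed in any faithful product of representations. The hypothesis says the same inequality holds on $A\odot A^{\op}$, so $\pi_{\cH}|_{A\odot A^{\op}}$ extends to a $*$-homomorphism $\sigma\colon A\otimes_{\min}A^{\op}\to B(\cH)$. The goal is therefore to promote min-continuity from $A\odot A^{\op}$ to $M\odot M^{\op}$.

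The plan is to pass through vector states. It suffices to show that for every $\xi\in\cH$ the state $\varphi_\xi(x)=\langle\pi_{\cH}(x)\xi,\xi\rangle$ on $M\odot M^{\op}$ is bounded by the min-norm: a $*$-representation whose vector states are all min-bounded is min-bounded. On $A\odot A^{\op}$, $\varphi_\xi$ extends to a state on $A\otimes_{\min}A^{\op}$, and $\varphi_\xi$ is separately normal in each variable on $M\odot M^{\op}$, since $\cH$ is a normal bimodule. Local reflexivity now enters through the standard fact, due to Effros--Haagerup and recorded in Brown--Ozawa around the characterization of local reflexivity and in the treatment of the Akemann--Ostrand/Ozawa solidity arguments. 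That fact says: if $A$ is locally reflexive, then $A^{**}\otimes_{\min}B$ embeds compatibly in $(A\otimes_{\min}B)^{**}$, in the sense that a state on $A\otimes_{\min}B$ extends to a min-continuous functional on $A^{**}\odot B$ which is normal in the first variable. Apply this to $B=A^{\op}$. Then project from $A^{**}$ to $M$ using the normal central cover $A^{**}\to M$, which exists because $M=A''$ and $M$ acts normally. This yields that $\varphi_\xi$ is min-bounded on $M\odot A^{\op}$. Now $A^{\op}$ is also locally reflexive, since $A^{\op}\cong \bar A$ and local reflexivity passes to the opposite algebra. Repeating the argument in the second variable, with $M$ now fixed in the first leg (or using the symmetric version of the Effros--Haagerup lemma), gives min-boundedness on $M\odot M^{\op}$.

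The main obstacle is the extension step, and in particular that the extension produced by local reflexivity agrees with $\varphi_\xi$. The first step toward this is uniqueness of normal extensions: a functional on $M\odot A^{\op}$ that is normal in the first variable is determined by its values on $A\odot A^{\op}$, by Kaplansky density and normality. Our $\varphi_\xi$ has this normality, so it coincides with the extension coming from local reflexivity, and that extension is min-bounded. The same density argument handles the second leg.

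Concretely, for $x=\sum_i m_i\otimes n_i^{\op}$ we will approximate each $m_i$ strongly by bounded nets from $A$ via Kaplansky. We then need the min-norm of the approximants to be controlled by $\|x\|_{\min}$. This is exactly the step where local reflexivity (rather than mere density) is essential: it guarantees that the $A^{**}\otimes_{\min}$ norm agrees with the norm computed in the bidual. I would cite the precise statement from Brown--Ozawa, Proposition 9.2.5 / Lemma 9.2.9 style, rather than reprove it.
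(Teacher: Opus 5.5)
Your argument is correct, but it follows a different route from the paper's.

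\textbf{The paper's route.} It stays on the primal side. For $x_1,\dots,x_k,y_1,\dots,y_k\in M$ it takes a single net of u.c.p.\ maps $\phi_j\colon E\to A$ on the operator system spanned by all the legs. This gives $\sum_l\pi(\phi_j(x_l)\otimes\phi_j^{\op}(y_l^{\op}))\to\sum_l\pi(x_l\otimes y_l^{\op})$ strongly. The paper then concludes from three facts: lower semicontinuity of the norm, the hypothesis on $A\odot A^{\op}$, and contractivity of $\phi_j\otimes\phi_j^{\op}$ for the minimal tensor norm. Both legs are handled at once.

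\textbf{Your route.} You dualize. You reduce to vector states $\varphi_\xi$ and use local reflexivity in its property C$'$ form, i.e.\ min-continuity of $A^{**}\odot B\to(A\otimes_{\min}B)^{**}$. You identify the resulting extension with $\varphi_\xi$ by uniqueness of normal extensions. You then upgrade one leg at a time, first via $A$ and then via $A^{\op}\cong\bar A$.

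\textbf{What each buys.} Your argument uses only separate normality of $\varphi_\xi$ and the native point-weak$^*$ convergence in the definition of local reflexivity. The paper's one-line invocation leaves two things implicit:
\begin{enumerate}
\item transporting $E\subseteq M$ into $A^{**}$ (via $M\cong zA^{**}$);
\item upgrading point-weak$^*$ convergence of the $\phi_j$ to strong convergence (e.g.\ by convex combinations).
\end{enumerate}
The upgrade is needed because products of the left and right actions are not jointly weakly continuous. Your leg-by-leg argument sidesteps both points. The paper's route, in turn, is shorter and avoids biduals and the second application of local reflexivity.

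\textbf{Small repairs.}
\begin{enumerate}
\item In the ``project to $M$'' step, the quotient $q\colon A^{**}\to M$ alone does not bound $\varphi_\xi$. What you actually use is the normal embedding $M\cong zA^{**}\subseteq A^{**}$, which lifts $M\odot A^{\op}$ min-isometrically into $A^{**}\odot A^{\op}$. You combine this with the identity $\psi(a\otimes b)=\varphi_\xi(q(a)\otimes b)$.
\item The word ``binormal'' is misleading. $\pi_{\cH}$ is automatically continuous for the binormal norm; the real content is min-continuity, which your displayed inequality states correctly.
\item Drop the final ``concretely'' paragraph. Approximating each $m_i$ independently by Kaplansky nets gives no control on the min-norm of the approximating tensors. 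In your argument that control comes from the property C$'$ continuity together with normality, not from Kaplansky density.
\end{enumerate}
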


\begin{proof}
Let $\pi\colon M\otimes_{\text{alg}} M^{\op}\to B(\cH)$ be the representation induced by the bimodule structure of $M$.
Fix $x_{1},\cdots,x_{k},y_{1},\cdots,y_{k}\in M$. Let $E$ be the operator system in $M$ generated by $x_{1},\cdots,x_{k},y_{1},\cdots,y_{k}$. Then by local reflexivity we may find a net
$\phi_{j}\colon E\to A$
of unital, completely positive maps so that $\phi_{j}(a)\to_{j}a$ SOT for all $a\in E$. 
By normality of $M$, and the uniform estimates $\|\phi_{j}(x_{l})\|\leq \|x_{l}\|$, $\|\phi_{j}(y_{l})\|\leq \|y_{l}\|$ we have
\[\sum_{l}\pi(\phi_{j}(x_{l})\otimes \phi_{j}^{\op}(y_{l}^{\op}))\to_{j}\sum_{l}\pi(x_{l}\otimes y_{l}^{\op}), \text{ in the strong operator topology.}\]
Thus,
\begin{align*}
\left\|\sum_{l}\pi(x_{l}\otimes y_{l}^{\op})\right\|&\leq \liminf_{j}\left\|\sum_{l}\pi(\phi_{j}(x_{l})\otimes \phi_{j}^{\op}(y_{l}^{\op}))\right\|\\
&\leq  \liminf_{j}\left\|\sum_{l}\phi_{j}(x_{l})\otimes \phi_{j}^{\op}(y_{l}^{\op})\right\|_{A\otimes_{\min}A^{\op}},    
\end{align*}
the last inequality following by our hypothesis that 
\[_A \cH_A\prec _A L^{2}(M)\otimes L^{2}(M)_A.\]
Since $\phi_{j}\otimes \phi_{j}^{\op}$ is u.c.p it follows that 
\[\left\|\sum_{l}\phi_{j}(x_{l})\otimes \phi_{j}^{\op}(y_{l}^{\op})\right\|_{A\otimes_{\min}A^{\op}}\leq \left\|\sum_{l}x_{l}\otimes y_{l}^{\op}\right\|_{M\otimes_{\min}M^{\op}},\]
since  $x_{1},\cdots,x_{k},y_{1},\cdots,y_{k}\in M$ were arbitrary, this proves that 
\[_M \cH_M\prec _M L^{2}(M)\otimes L^{2}(M)_M.\]

\end{proof}

We now explain how Conjecture is equivalent to weak coarseness of the random embedding of $L(\F_{r})$ into an ultraproduct of matrices. In particular, this proves Theorem \ref{thm:intro main thm} (\ref{item: intro item weak coarse equiv}).

\begin{thm}\label{thm: weak coarseness equivalent}
Conjecture \ref{conj: the conj} is equivalent to the following. For every $r\in \N$ and for almost every $U=(U^{(N)})_{n=1}^{\infty}\in \prod_{N}\cU(\M_{N}(\C)^{r})$ we have for every $\omega\in \beta\N\setminus \N$ that the embedding $\Theta_{U,\omega}$ is weakly coarse. 
\end{thm}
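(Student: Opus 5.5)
The plan is to translate both sides into statements about the $L(\F_r)$-$L(\F_r)$ bimodule $\cH_{U,\omega}:=L^2(\prod_{N\to\omega}\M_N(\C))\ominus L^2(\Theta_{U,\omega}(L(\F_r)))$, and to identify the quantity in Conjecture \ref{conj: the conj} with the operator norm of $\pi_{\cH_{U,\omega}}(\alpha)$. The key dictionary is
\[\alpha(U^{(N)}\otimes 1_N,1_N\otimes (U^{(N)})^{t})\#A=\sum\alpha_{w_1,w_2}\,w_1(U^{(N)})\,A\,w_2(U^{(N)}),\]
since $((U^t)^{w})^{T}=w(U)$ up to reversing words. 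Passing to the ultraproduct, this is the bimodule action $\pi(\alpha)$ of $\alpha$, viewed in $\C[\F_r]\otimes\C[\F_r]^{\op}$, on $(A_N)_{N\to\omega}$. Moreover, the norm of $\alpha(u\otimes 1,1\otimes u^{-1})$ in $C^*_\lambda(\F_r)\otimes_{\min}C^*_\lambda(\F_r)$ equals the norm of $\alpha$ acting on the coarse bimodule $L^2\otimes L^2$. This is because the right action of $\F_r$ on $\ell^2(\F_r)$ is implemented by $\rho$, which is unitarily equivalent to $\lambda$ composed with inversion.

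For the implication ``Conjecture $\Rightarrow$ weakly coarse'', fix $U$ in the conull set where Voiculescu's theorem holds and where the conjectured inequality holds for all $\alpha$ in a countable dense subset (rational coefficients). By continuity in the coefficients, this gives the inequality for every $\alpha$. Fix $\omega$ and a vector $\xi=(A_N)_{N\to\omega}\in\cH_{U,\omega}$ with $\|A_N\|\le R$ and $\|\xi\|_2\le 1$. Such bounded representatives are dense in $\cH_{U,\omega}$, since the ultraproduct is dense in its $L^2$ and the projection onto the complement can be approximated. Orthogonality to $L^2(L(\F_r))$ means that $\lim_{N\to\omega}\tr(A_N^*P(U^{(N)}))=0$ for all $P$. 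Given a finite set $F$ and $\varepsilon>0$, we perturb $A_N$ by subtracting its projection onto $\operatorname{span}F(U^{(N)})$. The Gram matrices converge to a nondegenerate limit, which we may assume after choosing $F$ linearly independent in $L(\F_r)$. The perturbation is small in both $\|\cdot\|_2$ and operator norm, so the constraints $\tr(A^*P(U^{(N)}))=0$ become exact with a slightly enlarged $R$ and a renormalized $\|A\|_2$. The conjecture then gives $\|\pi(\alpha)\xi\|_2\le\|\alpha\|_{\min}+\varepsilon$. Hence $\|\pi_{\cH}(a)\|\le\|\pi_{\mathrm{coarse}}(a)\|$ for $a$ in the algebraic span of $\F_r\times\F_r^{\op}$. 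This is weak containment over the $C^*$-algebra $C^*_\lambda(\F_r)$, which is exact and hence locally reflexive. Proposition \ref{prop: folklore prop} then upgrades this to weak coarseness over $L(\F_r)$.

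For the converse, assume weak coarseness for almost every $U$ and all $\omega$, and suppose the inequality fails for some $U$ in the conull set and some $\alpha$. Then there are $R$ and $\delta>0$ such that for every finite $F$ the $\limsup$ exceeds $\|\alpha\|_{\min}+\delta$. Enumerate $\F_r$ and take $F_k$ to be the first $k$ group elements. Choose $N_k$ increasing and matrices $A_{N_k}$ satisfying the constraints for $F_k$, with $\|\alpha\#A_{N_k}\|_2>\|\alpha\|_{\min}+\delta/2$. Take an ultrafilter $\omega$ concentrated on $\{N_k\}$ and fill in the other indices with zero. The resulting $\xi$ lies in $\cH_{U,\omega}$, since it is orthogonal to every group element and hence to $L^2(L(\F_r))$. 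It has norm at most $1$ and satisfies $\|\pi(\alpha)\xi\|\ge\|\alpha\|_{\min}+\delta/2$, which contradicts weak containment. One technicality needs care: if $\|\xi\|_2$ tends to $0$, the contradiction still holds, because $\|\pi(\alpha)\xi\|$ is bounded below while $\|\pi(\alpha)\xi\|\le\|\alpha\|_{\min}\|\xi\|$.

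The main obstacle is the first direction, specifically the exact-orthogonality perturbation. We must ensure that projecting off $\operatorname{span}F(U^{(N)})$ keeps operator norms bounded, which holds since $F$ is finite and the Gram matrix is uniformly invertible along $\omega$. We must also handle the measurability caveat by reducing everything to countably many $\alpha$ and finite sets $F$, so that the conull set is defined independently of $\omega$.
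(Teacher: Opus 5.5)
Your proposal is correct and follows essentially the same route as the paper. In the forward direction, the conjectured bound controls $\|\pi(\alpha)\xi\|_2$ for bounded vectors $\xi$ in $L^2(\cM)\ominus L^2(\Theta_{U,\omega}(L(\F_r)))$. This gives weak containment over $C^*_\lambda(\F_r)$, which is upgraded to $L(\F_r)$ via exactness, local reflexivity and Proposition \ref{prop: folklore prop}. The converse builds a bad vector along an ultrafilter concentrated on a subsequence $(N_k)$, with constraints coming from increasing finite sets $F_k$.

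Your forward direction is in fact more careful than the paper's. You project off $\operatorname{span}F(U^{(N)})$ only for the single finite $F$ supplied by the conjecture, and you fix one conull set via a countable dense family of $\alpha$. The paper instead asserts representatives $(A_N)$ exactly orthogonal to every $P(U^{(N)})$ for all but finitely many $N$. That cannot be arranged along an arbitrary free ultrafilter, and it is not needed.
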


\begin{proof}
Set $(\cM,\tau)=\prod_{N\to\omega}(\M_{N}(\C),\tr).$
Note that Conjecture 3.2 implies that for almost every $U$ the following holds: for every $[A_{N}]_{N\in \N}\in \ell^{\infty}-\oplus_{N}\M_{N}(\C)$ with the property that for every polynomial $P\in \C(\F_{r})$, $\{N:\tr(A_{N}^{*}P(U^{(N)}))\ne 0\}$ is finite we have 
\[\limsup_{n\to\infty}\|\alpha(U^{(N)}\otimes 1_{N},1_{N}\otimes (U^{(N)})^{t})\#A_{N}\|_{2}\leq \|\alpha(u\otimes 1,1\otimes u)\|_{C^{*}_{\lambda}(\F_{r})\otimes_{\min}C^{*}_{\lambda}(\F_{r})}.\]
Note that if $a\in \cM$, then we may write $a=(A_{N})_{N\to\omega}$ where $[A_{N}]_{N\in \N}\in \ell^{\infty}-\oplus_{N}\M_{N}(\C)$ satisfies that for every polynomial $P\in \C(\F_{r})$ we have that $\{N:\tr(A_{N}^{*}P(U^{(N)}))\ne 0\}$ is finite. The above then shows that 
\[\|\alpha(\Theta_{U,\omega}(u)\otimes 1,1\otimes \Theta_{U,\omega}^{\op}(u^{\op}))\|\leq \|\alpha(u\otimes 1,1\otimes u)\|_{C^{*}_{\lambda}(\F_{r})\otimes C^{*}_{\lambda}(\F_{r})}.\]
By density of $\cM\ominus \Theta_{U,\omega}(L(\F_{r})$ in $L^{2}(\cM)\ominus L^{2}(\Theta_{U,\omega}(L(\F_{r})))$, and the fact that $C^{*}_{\lambda}(\F_{r})\cong C^{*}_{\lambda}(\F_{r})^{\op}$ via $\lambda(g)\mapsto \lambda(g^{-1})^{\op}$, we conclude that
\[_{C^{*}_{\lambda}(\F_{r})} L^{2}(\cM)\ominus L^{2}(\Theta_{U,\omega}(L(\F_{r})))_{C^{*}_{\lambda}(\F_{r})}\prec {C^{*}_{\lambda}(\F_{r})}_[\ell^{2}(\F_{r})\otimes \F_{r})]_{C^{*}_{\lambda}(\F_{r})}.\]
Note that $C^{*}_{\lambda}(\F_{r})$ is exact by \cite[Theorem 3.2]{DykemaExact}, and hence locally reflexive (see the comments at the end of the introduction of \cite{KirchbergExact} and \cite[Theorem 4.1]{EffrosHaagerupLR}).
Applying proposition \ref{prop: folklore prop},  we conclude that $\Theta_{U,\omega}$ is a weakly coarse embedding.

Conversely, suppose that for almost every $U$ we have that $\Theta_{U,\omega}$ is weakly coarse embedding for every free ultrafilter $\omega$. Suppose that Conjecture \ref{conj: the conj} is false. Since $\F_{r}$ is countable,  by a diagonal argument the negation of \ref{conj: the conj} combined with the assumption that the random embeddings is weakly coarse implies that  we may find a $U=(U^{(N)})_{N}\in \prod_{N}\cU(\M_{N}(\C))^{r}$ so that $\Theta_{U,\omega}$ is weakly coarse for every free ultrafilter and so that 
there exist an $R>0$, an $\alpha\in \C(\F_{r})\otimes \C(\F_{r})$ and a sequence $N_{1}<N_{2}<\cdots$ of natural numbers so that 
\[\lim_{k\to\infty}\sup_{\substack{\|A\|\leq R,\\ \tr(A^{*}P(U^{(N_{k})})=0 \forall P\in F},\|A\|_{2}\leq 1}\|\alpha(U^{(N_{k})}\otimes 1_{N_{k}},1_{N_{k}}\otimes (U^{(N_{k})})^{t})\# A\|_{2}\]
\[>\|\alpha(u\otimes 1,1\otimes u)\|_{C^{*}_{\lambda}(\F_{r})\otimes C^{*}_{\lambda}(\F_{r})},\]
for all finite $F\subseteq \F_{r}$. Choose a free ultrafilter $\omega$ on $\beta\N\setminus \N$ which extends $\lim_{k\to\infty}a_{N_{k}}$ for sequences $[a_{N}]_{N\in \N}\in \ell^{\infty}(\N)$ which have the property that $\lim_{k\to\infty}a_{N_{k}}$ exists.  
By another diagonal argument, we may find a sequence $(A_{N_{k}})_{k=1}^{\infty}\in \prod_{k}R\text{Ball}(\M_{N_{k}}(\C))$ with $\|A_{N_{k}}\|_{2}\leq 1$ for all $k$ and so that  for every finite $F\subseteq \F_{r}$ we have that 
\[\{k:\tr(A_{N_{k}}^{*}P(U^{(N_{k})}))\ne 0\}\]
is finite and for which 
\[\lim_{k\to\infty}\|\alpha(U^{(N_{k})}\otimes 1_{N_{k}},1_{N_{k}}\otimes (U^{(N_{k})})^{t})\# A_{N_{k}}\|_{2}>\|\alpha(u\otimes 1,1\otimes u)\|_{C^{*}_{\lambda}(\F_{r})\otimes C^{*}_{\lambda}(\F_{r})}.\]
Set $A_{N}=0$ if $N\ne N_{k}$ for any $k$. Then $a=(A_{N})_{N\to\omega}\in \cM\ominus \Theta_{U,\omega}(L(\F_{r}))$ with $\|a\|_{2}\leq 1$ and 
\[\|\alpha(\Theta_{U,\omega}(u)\otimes 1,1\otimes \Theta_{U,\omega}^{\op}(u^{\op}))\# a\|_{2}>\|\alpha(u\otimes 1,1\otimes u)\|_{C^{*}_{\lambda}(\F_{r})\otimes C^{*}_{\lambda}(\F_{r})}.\]
This contradicts our assumption that $\Theta_{U,\omega}$ is weakly coarse for every free ultrafilter $\omega$. 
    
\end{proof}

\section{From Conjecture \ref{intro conj: the conj} to Peterson-Thom}

Recall the definition of the concentration function $\alpha_{(X,d,\mu)}$ from the preliminaries. Suppose we are given sequences:
\begin{itemize}
    \item $(X_{n})_{n=1}^{\infty}$ of Polish spaces, together with
    \item  compatible continuous metrics $(d_{n})_{n=1}^{\infty}$,
    \item and Borel probability measures $(\mu_{n})_{n=1}^{\infty}$,
    \item and a sequence of positive real numbers $(s_{n})_{n=1}^{\infty}$.
\end{itemize}  Then we say that $(X_{n},\mu_{n},d_{n})$ satisfies \emph{exponential concentration of measure at scale $s_{n}$} if 
\[\limsup_{n\to\infty}\frac{-1}{s_{n}}\log \alpha_{(X_{n},d_{n},\mu_{n})}(\varepsilon)>0\]
for every $\varepsilon>0$.  


The following is \cite[Theorem 4.3]{HayesPT}.

\begin{thm}
Suppose we are given a tracial von Neumann algebra $(M,\tau)$, a countable index set $J,$ and an $x\in M^{J}$ with $W^{*}(x)=M.$ Suppose $R\in [0,\infty)^{J}$ satisfies $\|x_{j}\|_{\infty}\leq R_{j}$ for all $j\in J.$ Assume we are given a sequence of natural numbers $n(k)\to\infty,$ and a sequence  $\mu^{(k)}\in \Prob(\M_{N(k)}(\C)^{J})$ such that
\begin{itemize}
    \item $\sum_{k}\mu^{(k)}\left(\left(\prod_{j\in J}\{A\in \M_{N(k)}(\C)^{J}:\|A_{j}\|_{\infty}\leq R_{j}\right)^{c}\right)<\infty$.
    \item $\mu^{(k)}(\Gamma^{(N(k))}_{R}(\mathcal{O}))\to 1$ for all weak$^{*}$ neighborhoods $\mathcal{O}$ of $\ell_{x}$ in $\Sigma_{R,J}$,
    \item $\mu^{(k)}$ has exponential concentration with scale $N(k)^{2}.$
    \end{itemize}
    Then:
\begin{enumerate}[(i)]
    \item there is a conull subset $\Omega_{0}\subseteq \prod_{k}M_{N(k)}(\C)^{J}$ so that for any $A=(A^{(k)})_{k}\in \Omega_{0},$ and for every free ultrafilter $\omega$ on $\N,$ there is a unique trace-preserving $*$-homomorphism $\Theta_{A,\omega}\colon M\to\prod_{k\to\omega}(M_{N(k)}(\C),\tr)$ so that
    \[\Theta_{A,\omega}(P(x))=(P(A^{(k)}))_{k\to\omega}\,\]for all every noncommutative $*$-polynomial $P$ in abstract variables $(T_{j})_{j\in J}$.\label{item:as defined homom}
    \item If $Q\leq M$ satisfies $h(Q:M)\leq 0,$ then there is a conull subset $\Omega\subseteq \Omega_{0}$ so that for all $A,B\in \Omega$ and for every free ultrafilter $\omega$ on $\N,$ we have that  $\Theta_{A,\omega}\big|_{Q},$ $\Theta_{B,\omega}\big|_{Q}$ are unitarily conjugate.  \label{item:almost sure conjugacy}
    \end{enumerate}
    \end{thm}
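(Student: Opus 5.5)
The plan is to prove the two parts separately. Part (i) uses concentration of measure together with Borel--Cantelli. Part (ii) combines the subexponential growth of orbital covering numbers, which is encoded by $h(Q:M)\leq 0$, with exponential concentration at scale $N(k)^2$.

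\textbf{Part (i).} Fix a noncommutative $*$-polynomial $P$ in the variables $(T_j)_{j\in J}$, which involves only finitely many variables. Consider the function
\[
A\mapsto \tr(P(A)).
\]
On the operator-norm ball $\prod_j\{\|A_j\|_\infty\leq R_j\}$ this function is Lipschitz for $\|\cdot\|_2$, with a constant depending only on $P$ and $R$. Exponential concentration at scale $N(k)^2$ then gives
\[
\mu^{(k)}\bigl(|\tr(P(A))-\operatorname{median}|>\varepsilon\bigr)\leq e^{-cN(k)^2}
\]
on this ball. A genuinely Lipschitz function is needed here; I would use a McShane extension from the ball, since the mass outside the ball is summable by the first hypothesis.

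The second hypothesis says the microstate sets $\Gamma_R^{(N(k))}(\mathcal O)$ have measure tending to $1$. This forces the median to be within $\varepsilon$ of $\tau(P(x))$ for large $k$. Borel--Cantelli over the countably many rational polynomials and rational $\varepsilon$ then yields a conull set $\Omega_0$ on which the following hold: $\tr(P(A^{(k)}))\to\tau(P(x))$ for all $P$, and eventually $\|A^{(k)}_j\|\leq R_j$.

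By the ultraproduct characterization of convergence in distribution recalled in the preliminaries, the trace-preserving embedding $\Theta_{A,\omega}$ exists for every $\omega$. It is unique because $W^*(x)=M$.

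\textbf{Part (ii).} Since $h(Q:M)\leq 0$, I would fix generators $y$ of $Q$ (countably many). Each $y_i$ is approximated in $\|\cdot\|_2$ by polynomials $P_i(x)$ with operator norm controlled, via Kaplansky density. For $\varepsilon>0$, a finite set of the $y$'s, and weak$^*$ neighborhoods $\mathcal O$, the definition of $1$-bounded entropy gives the following. The set of $Q$-microstates $P(\Gamma_R^{(N)}(\mathcal O))$ can be covered by $K_N=e^{o(N^2)}$ sets of the form
\[
\{\,VZV^*: V\in \cU(N)\,\}_\varepsilon,
\]
that is, $\varepsilon$-neighborhoods of unitary orbits.

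Pull these back to $\M_{N(k)}^J$. Since $\mu^{(k)}(\Gamma)\to1$, some single pulled-back orbit-neighborhood $S_k$ has
\[
\mu^{(k)}(S_k)\geq \tfrac12 K_{N(k)}^{-1}\geq e^{-\delta N(k)^2}
\]
for any prescribed $\delta>0$ once $k$ is large. This is the key point where entropy $\leq 0$ enters.

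The main obstacle is the next step: boosting a set of exponentially small but subexponentially decaying measure to almost full measure. I would argue as follows. Choose $\delta$ smaller than the concentration rate at scale $\eta$. If $\mu(N_\eta(S))<1/2$, then the complement $C$ of $N_\eta(S)$ has measure $\geq 1/2$. But $S$ lies outside $N_\eta(C)$, so
\[
\mu(S)\leq \alpha(\eta)\leq e^{-c_\eta N^2},
\]
which is a contradiction. Hence $\mu(N_\eta(S_k))\geq 1/2$, and a second application of concentration gives
\[
\mu^{(k)}\bigl(N_{2\eta}(S_k)\bigr)\geq 1-e^{-c_\eta N(k)^2},
\]
which is summable in $k$.

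The map $A\mapsto P(A)$ is Lipschitz on bounded sets. Therefore, by Borel--Cantelli, for almost every pair $A,B$ and all large $k$, the $Q$-microstates $P(A^{(k)})$ and $P(B^{(k)})$ both lie within $C\eta+\varepsilon$ of the same unitary orbit. This gives unitaries $V_k$ with
\[
\|V_kP(A^{(k)})V_k^*-P(B^{(k)})\|_2\lesssim \eta+\varepsilon.
\]

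Finally, intersect over countably many rational $\eta,\varepsilon$, finite subsets of generators, and neighborhoods. A diagonal argument along $\omega$ then produces a single unitary $(V_k)_{k\to\omega}$ that conjugates $\Theta_{A,\omega}|_Q$ to $\Theta_{B,\omega}|_Q$ on all generators of $Q$, hence on all of $Q$. The diagonal choice of $V_k$ is made per $A,B$ and $\omega$, so it avoids the measurability issues mentioned earlier.
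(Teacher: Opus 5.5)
Your approach matches the source. The paper does not reprove this theorem; it quotes it from Hayes' earlier work, and your argument follows the same lines as that proof:
\begin{itemize}
\item for (i), Lipschitz traces, concentration and Borel--Cantelli;
\item for (ii), subexponential orbital covering numbers from $h(Q:M)\leq 0$, then pigeonhole to find a deterministic orbit-neighbourhood $S_k$ of measure at least $e^{-\delta N(k)^2}$;
\item then the concentration-function trick to enlarge it to measure $1-e^{-c_\eta N(k)^2}$, followed by Borel--Cantelli and a diagonal argument in the ultraproduct.
\end{itemize}
Because the $S_k$ are deterministic, your events are per-sequence. That is exactly what yields a single conull $\Omega$ that works for all pairs $A,B\in\Omega$, which is stronger than your phrase ``almost every pair'' suggests.

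The one point to flag is your claim that $e^{-cN(k)^2}$ is summable in $k$, which both Borel--Cantelli steps use. The hypothesis $N(k)\to\infty$ alone does not give $\sum_k e^{-cN(k)^2}<\infty$, and without it conclusion (i) is false. For example:
\begin{itemize}
\item take $M=L(\mathbb{Z})$, $x$ a Haar unitary, and $\mu^{(k)}$ Haar measure on $\mathcal{U}(N(k))$;
\item let each value $n$ occur $\lceil e^{n^3}\rceil$ times in the sequence $(N(k))_k$;
\item for Haar $U\in\mathcal{U}(n)$, a volume argument gives $\mathbb{P}(|\tr U|\geq 1/2)\geq \mathbb{P}(\|U-1\|_2<1/2)\geq e^{-Cn^2}$;
\item the second Borel--Cantelli lemma, applied over blocks, then shows that almost surely $|\tr A^{(k)}|\geq 1/2$ for infinitely many $k$.
\end{itemize}
So state the implicit growth assumption explicitly, for instance that $N(k)$ is strictly increasing (as in the application, where $N(k)=N$). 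Under that assumption your proof goes through.

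Similarly, you are right to read exponential concentration as $\liminf_k -N(k)^{-2}\log\alpha(\varepsilon)>0$. With the $\limsup$ written literally in the paper's definition, the bound $\alpha(\varepsilon)\leq e^{-cN(k)^2}$ would hold only along a subsequence, and Borel--Cantelli would not apply.
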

The above result implies the following result which can be viewed as a \emph{random} version of Jung's theorem \cite{JungTubularity}.

\begin{cor}\label{cor: random jung collapse}
Fix $r\in \N$. Let $a_{1},\cdots,a_{r}$ be free generators of $L(\F_{r})$. Then:
\begin{enumerate}[(i)]
\item There is a conull subset $\Omega_{0}\subseteq \prod_{N}\cU(\M_{N}(\C))^{r}$ (with respect to Haar measure) so that for all $U\in \Omega$ and all $\omega\in \beta \N\setminus \N$ we there is a unique trace-preserving $*$-homomorphism $\Theta_{U,\omega}\colon L(\F_{r})\to \prod_{N\to\omega}\M_{N}(\C)$ satisfying $\Theta_{U,\omega}(\lambda(a_{j}))=(U_{n,j})_{n\to\omega}$ for all $j=1,\cdots,r$.
\item If $Q\leq L(\F_{r})$ has $h(Q:M)=0$, then there is a conull $\Omega\subseteq \Omega_{0}$ so that for all $U,V\in \Omega$ and for all $\omega\in \beta \N\setminus\N$, we have that $\Theta_{U,\omega}|_{Q}$ and $\Theta_{V,\omega}|_{Q}$ are unitarily conjugate.
\end{enumerate}
\end{cor}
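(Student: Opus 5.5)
The corollary follows by applying the preceding theorem (\cite[Theorem 4.3]{HayesPT}) with $M=L(\F_{r})$, $J=\{1,\dots,r\}$, $x=(\lambda(a_{j}))_{j=1}^{r}$, $R_{j}=1$, $N(k)=k$, and $\mu^{(k)}$ the Haar measure on $\cU(\M_{k}(\C))^{r}$, viewed as a probability measure on $\M_{k}(\C)^{r}$. The plan is to check the three hypotheses of that theorem in turn and then read off its conclusions (i) and (ii).

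\textbf{Norm bound.} Since $\mu^{(k)}$ is supported on tuples of unitaries, $\|A_{j}\|_{\infty}=1\leq R_{j}$ holds $\mu^{(k)}$-almost surely. Hence each term $\mu^{(k)}\big((\prod_{j}\{\|A_{j}\|_{\infty}\leq 1\})^{c}\big)$ vanishes, and the summability condition holds trivially.

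\textbf{Microstates.} We need $\mu^{(k)}(\Gamma_{R}^{(k)}(\mathcal{O}))\to 1$ for every weak$^{*}$ neighborhood $\mathcal{O}$ of $\ell_{x}$. It suffices to treat basic neighborhoods, which are defined by finitely many $*$-monomials $P$ and a tolerance $\varepsilon$; we then need $\P(|\tr(P(U^{(k)}))-\tau(P(x))|<\varepsilon)\to1$ for each such $P$. Voiculescu's asymptotic freeness theorem gives $\E[\tr(P(U^{(k)}))]\to\tau(P(x))$. The map $U\mapsto\tr(P(U))$ is Lipschitz for $\|\cdot\|_{2}$ on $\cU(\M_{k}(\C))^{r}$, with constant at most the degree of $P$, by the telescoping estimate together with $|\tr(AB)|\leq\|A\|_{2}\|B\|_{2}$. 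Theorem \ref{thm: unitary concentration}(i) then shows that the deviation probability is at most $2e^{-ck^{2}\varepsilon^{2}}$, which tends to $0$. Since the neighborhood involves only finitely many $P$, a union bound finishes this step.

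\textbf{Exponential concentration.} Theorem \ref{thm: unitary concentration}(ii) gives $\alpha(\varepsilon)\leq e^{-\varepsilon^{2}k^{2}/48}$ for the normalized Hilbert--Schmidt metric on $\cU(\M_{k}(\C))^{r}$, so $-\frac{1}{k^{2}}\log\alpha(\varepsilon)\geq\varepsilon^{2}/48>0$, which is exponential concentration at scale $N(k)^{2}$. There is one technical point here, and it is the main obstacle I anticipate. The theorem is phrased for $\mu^{(k)}$ on $\M_{k}(\C)^{J}$, whereas the concentration estimate is for $\cU(\M_{k}(\C))^{r}$ with the restricted metric. Since $\mu^{(k)}$ is supported on the unitary group, I would check that the concentration function computed in $\M_{k}(\C)^{r}$ (with the $\ell^{2}$-sum of the $\|\cdot\|_{2}$ norms, or whatever product metric \cite{HayesPT} uses) is bounded by the one computed on the support. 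This holds because enlargements taken in the ambient space contain the enlargements taken in the subspace, and the metrics on $\cU(\M_{k}(\C))^{r}$ differ from the ambient one only by a constant factor depending on $r$.

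\textbf{Conclusion.} Part (i) of the theorem gives the conull set $\Omega_{0}$ and the unique homomorphisms $\Theta_{U,\omega}$ with $\Theta_{U,\omega}(\lambda(a_{j}))=(U_{j}^{(N)})_{N\to\omega}$; uniqueness holds because $*$-polynomials in $x$ are dense. For part (ii), the hypothesis $h(Q:M)=0$ in particular gives $h(Q:M)\leq 0$, so part (ii) of the theorem applies directly and yields the required conjugacy statement.
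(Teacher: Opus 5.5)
Your proposal is correct and follows the paper's route: the paper gives no separate proof and deduces the corollary directly from \cite[Theorem 4.3]{HayesPT} applied to Haar unitaries, which is exactly what you do. What you add is an explicit check of the three hypotheses. The norm bound is trivial, the microstate condition follows from Voiculescu's theorem plus Lipschitz concentration, and exponential concentration at scale $N^{2}$ comes from Theorem \ref{thm: unitary concentration}(ii) after transferring it from $\cU(\M_{N}(\C))^{r}$ to the ambient matrix space. These verifications are all sound.
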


We now show that if Conjecture \ref{conj: the conj} is true, then the above corollaries combine to prove that every nonamenable subalgebra of a free group factor has positive $1$-bounded entropy. As is well known, this implies the Peterson-Thom conjecture (see \cite[Proposition 2.7]{HayesPT}). In particular, the following proves Theorem \ref{thm:intro main thm} (\ref{item: intro item PT}).

\begin{cor}\label{cor: PT corollary}
Adopt notation as in Corollary \ref{cor: random jung collapse}  Fix $\omega\in \beta \N\setminus \N$,  and set $(\cM,\tau)=\prod_{N\to\omega}(\M_{N}(\C),\tr)$.
Suppose that for every $r\in \N$, there is a conull subset $\Xi_{r}\subseteq \prod_{n}\cU(\M_{N}(\C))^{r}$ so that for every $U\in \Xi_{r}$ the $L(\F_{r})-L(\F_{r})$ bimodule $L^{2}(\cM)\ominus L^{2}(\Theta_{U,\omega}(L(\F_{r})))$ is weakly coarse. Then for every $r\in \N$ and every nonamenable $Q\leq L(\F_{r})$ we have that  $h(Q:L(\F_{r}))>0$.
\end{cor}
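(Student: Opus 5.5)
We argue by contradiction. Suppose $Q\leq L(\F_{r})$ is nonamenable with $h(Q:L(\F_{r}))\leq 0$, i.e.\ $h(Q:L(\F_r))=0$, since $1$-bounded entropy is $\geq 0$ whenever it is defined. By Corollary \ref{cor: random jung collapse}(ii) there is a conull $\Omega\subseteq\Omega_0$ such that for all $U,V\in\Omega$, $\Theta_{U,\omega}|_Q$ and $\Theta_{V,\omega}|_Q$ are unitarily conjugate in $\cM$. The idea is to produce two random embeddings that \emph{agree} on a subalgebra of $Q$ yet are ``independent'', and show that the conjugating unitary then gives a vector that weak coarseness forbids.

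\textbf{Doubling the number of generators.} Work with $2r$ generators. For $(U,V)\in\Xi_{2r}\cap\Omega_0^{(2r)}$, which is conull in $\prod_N\cU(\M_N(\C))^{2r}$, let $W=(U,V)$. Consider the two embeddings $\iota_1,\iota_2\colon L(\F_r)\to L(\F_{2r})$ sending the generators to the first $r$, respectively the last $r$, generators of $\F_{2r}$. Then $\Theta_{U,\omega}=\Theta_{W,\omega}\circ\iota_1$ and $\Theta_{V,\omega}=\Theta_{W,\omega}\circ\iota_2$. By Fubini, for almost every $W$ both $U$ and $V$ lie in $\Omega$, so there is a unitary $w\in\cM$ with $w\Theta_W(\iota_1(x))w^*=\Theta_W(\iota_2(x))$ for all $x\in Q$. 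Write $P=L(\F_{2r})$ and $Q_i=\iota_i(Q)$. Then $w\,\Theta_W(q_1)=\Theta_W(q_2)\,w$, where $q_2$ is the image of $q_1$ under the flip isomorphism.

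\textbf{Using weak coarseness.} Decompose $w=E(w)+w^{\perp}$, where $E$ is the conditional expectation onto $\Theta_W(P)$ and $w^{\perp}\in L^2(\cM)\ominus L^2(\Theta_W(P))$; both parts satisfy the same intertwining relation because $E$ is bimodular. The plan is to show that both parts vanish, contradicting $\|w\|_2=1$.
\begin{itemize}
\item For $E(w)\in L^2(P)$: we have $E(w)x=\sigma(x)E(w)$ for $x\in Q_1$, where $\sigma$ is the flip automorphism of $\F_{2r}$. Since $Q_1\subseteq L(\F_r*\{e\})$ and $Q_2$ sits in the other free factor, a standard intertwining argument in free products (Popa's malleability, or the free-product lemma that a diffuse subalgebra of one factor cannot be intertwined into the other) forces $E(w)=0$. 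This uses only that $Q$ is diffuse, which holds since $Q$ is nonamenable.
\item For $w^{\perp}$: the relation says that $w^{\perp}$ is a nonzero vector in the $Q_2$-$Q_1$ bimodule $L^2(\cM)\ominus L^2(P)$, invariant under $q\mapsto \sigma(q)\,\xi\, q^*$. By hypothesis this $P$-$P$ bimodule is weakly coarse, hence so is its restriction to $Q_2$-$Q_1$. Twisting by the isomorphism $\sigma\colon Q_1\to Q_2$, we obtain a $Q$-$Q$ bimodule that is weakly contained in $L^2(Q)\otimes L^2(Q)$ and has a nonzero $Q$-central vector. Thus the trivial bimodule $L^2(Q)$ is weakly contained in the coarse one, which means $Q$ is amenable (Connes). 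This is the desired contradiction.
\end{itemize}

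\textbf{Main obstacle.} The delicate step is the first bullet, together with the measurability and Fubini bookkeeping. One must verify that $E(w)$ satisfies the intertwining relation \emph{exactly} inside $L^2(L(\F_{2r}))$, and then invoke the free-product non-intertwining result for $Q_1\subseteq L(\F_r)$ into the other free copy; this is what rules out the contribution of $E(w)$. A second, smaller obstacle is that weak coarseness for the restricted, $\sigma$-twisted bimodule must be deduced carefully: the composition of weak containment with restriction along the isomorphism $\sigma$ should be transported to $Q$-$Q$ bimodules, where $\sigma$ identifies $L^2(Q_2)\otimes L^2(Q_1)$ with $L^2(Q)\otimes L^2(Q)$. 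I would check this through the norm characterization of weak containment on $Q\otimes_{\mathrm{alg}}Q^{\op}$.
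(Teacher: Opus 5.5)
Your skeleton matches the paper's: double to $2r$ generators, take the conjugating unitary $w$ from the random Jung collapse, and play it against weak coarseness. The step that disposes of $E(w)$, however, has a genuine gap.

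You claim $Q$ is diffuse because it is nonamenable, and that is false. For a nontrivial projection $p\in L(\F_r)$, the algebra $Q=\C p\oplus(1-p)L(\F_r)(1-p)$ is nonamenable but has a minimal projection. For such $Q$ the free-product argument does not give $E(w)=0$. Indeed, the element $\sigma(p)p\in P$ has $\|\sigma(p)p\|_2^2=\tau(p)^2\neq 0$ by freeness, and it satisfies $\sigma(x)\,\sigma(p)p=\sigma(p)p\,x$ for all $x\in Q_1$. So $L^2(P)$ really does contain nonzero $\sigma$-central vectors.

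Once $E(w)\neq0$ is possible, your second bullet also fails as stated. A nonzero central vector $\xi$ in a weakly coarse $Q$-$Q$ bimodule only produces $L^2(Qz)$ as a sub-bimodule, where $z\in Z(Q)$ is the support of the normal tracial functional $x\mapsto\langle x\xi,\xi\rangle$. So you only learn that the corner $Qz$ is amenable. Amenability of all of $Q$ needs a \emph{tracial} central vector. The unitary $w$ is tracial, but $w^{\perp}$ need not be.

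The paper never splits $w$. The fact underlying your intertwining step is that $L^2(P)$ is a multiple of the coarse bimodule when $Q$ acts on the left through one free copy and on the right through the other. This holds because the action $(g,h)\cdot x=gxh^{-1}$ of $\F_r\times\F_r$ on $\F_{2r}$ is free. Hence $L^2(\cM)=L^2(P)\oplus\bigl(L^2(\cM)\ominus L^2(P)\bigr)$ is weakly coarse as a $Q$-$Q$ bimodule. The unitary $w$ is a tracial central vector in it, so $L^2(Q)$ embeds in a weakly coarse bimodule, and Connes' criterion gives amenability with no diffuseness needed.

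If you want to keep your decomposition, you can repair it by cutting down by the central projection $z\in Z(Q)$ of the diffuse summand. That summand must be nonamenable, since the atomic summand is amenable. Your argument then gives $E(w)\Theta_W(\iota_1(z))=0$. So $w\Theta_W(\iota_1(z))$ lies in $L^2(\cM)\ominus L^2(P)$ and is a tracial central vector for $Qz$, which forces $Qz$ to be amenable, a contradiction.
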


\begin{proof}

Let $\Omega$ be as in Corollary \ref{cor: random jung collapse}. We view $\prod_{n}\cU(\M_{N}(\C))^{2r}=(\prod_{n}\cU(\M_{N}(\C))^{r})^{2},$
and set $\Upsilon=\Xi_{2r}\cap (\Omega\times \Omega)$. Then $\Upsilon$ is conull. Fix $(U,V)\in \Upsilon$. View $L^{2}(\cM)$ as an $L(\F_{r})-L(\F_{r})$ bimodule via $x\cdot \xi \cdot y=\Theta_{U,\omega}(x)\xi\Theta_{V,\omega}(y)$. Suppose that $W\in \cU(\cM)$ and $\Ad(W^{*})\circ \Theta_{U,\omega}|_{Q}=\Theta_{V,\omega}|_{Q}$. Then $W$ is a central and tracial vector for the $Q$-$Q$ bimodule structure. We may split $L^{2}(\cM)$ as a $Q$-$Q$ bimodule via
\[L^{2}(\cM)=L^{2}(\Theta_{(U,V),\omega}(L(\F_{2r})))\oplus [L^{2}(\cM)\ominus L^{2}(\Theta_{(U,V),\omega}(L(\F_{2r})))].\]
As a $Q$-$Q$ bimodule we have that $L^{2}(\Theta_{(U,V),\omega}(L(\F_{2r})))\cong \ell^{2}(\F_{2r})$, where $x\in Q$ acts on the left by left multiplication by $x*1$ and on the right by $1*x$.
Note that $\ell^{2}(\F_{2r})$ is as an $L(\F_{r})$-$L(\F_{r})$ bimodule is an infinite coarse. Indeed, the action of $\F_{r}\times \F_{r}$ on $\F_{2r}=\ip{a_{1},\cdots,a_{2r}}$ given by $(w_{1},\omega_{2})\cdots x=w_{1}(a_{1},\cdots,a_{r})xw_{2}(a_{r+1},\cdots,a_{2r})^{-1}$ has trivial stabilizer (one can more generally show that $L^{2}(M*M)$ is a coarse $M$-$M$ bimodule for any tracial von Neumann algebra $(M,\tau)$, but we will not need this here). 
Restricting the $L(\F_{r})$-$L(\F_{r})$ bimodule structure to $Q$-$Q$, we see that the $Q$-$Q$ bimodule  $\ell^{2}(\F_{2r})$ is coarse. 
Restricting the $L(\F_{2r})-L(\F_{2r})$ bimodule structure to $Q*1-1*Q$ we have that $L^{2}(\cM)\ominus L^{2}(\Theta_{(U,V),\omega}(L(\F_{2r})))$ is weakly coarse as a $Q$-$Q$ bimodule. Thus $L^{2}(\cM)$ is weakly coarse as a $Q$-$Q$ bimodule. Since $W$ is a central and tracial vector, we have that the trivial $Q$-$Q$ bimodule  $L^{2}(Q)$ is contained in the $Q$-$Q$ bimodule $L^{2}(\cM)$. So the trivial bimodule for $Q$ is weakly contained in the coarse bimodule, and thus $Q$ is amenable by \cite[Remark 5.29]{Connes}. 

\end{proof}

\section{Existential embedding conjecture}

In this section, we explain how Conjecture \ref{intro conj: random optimization conjecture} is a statement about existential embeddings. We recall the statement of the conjecture here.

\begin{conj} \label{conj: random optimization conjecture}
Let $r \in \mathbb{N}$.  Let $U_1^{(N)}$, \dots, $U_r^{(N)}$ be independent Haar random unitaries.  Let $u_1$, \dots, $u_r$ denote the canonical generators of $L(\mathbb{F}_r)$.  Then for every $s \in \N$ and every self-adjoint $*$-polynomial in $r + s$ variables, we have
\begin{multline} \label{eq: optimization conjecture}
\lim_{N \to \infty} \mathbb{E} \left[ \sup_{V_1,\dots,V_s \in \mathcal{U}(M_N(\mathbb{C}))} \tr_n(p(U_1^{(N)},\dots,U_r^{(N)},V_1,\dots,V_s)) \right] \\ = \sup_{v_1,\dots,v_s \in \mathcal{U}(L(\mathbb{F}_r))} \tau(p(u_1,\dots,u_r,v_1,\dots,v_s)).
\end{multline}
Here note that the candidates $V_j$ in the sup are allowed to depend on $U^{(N)}$.
\end{conj}

\begin{remark}
The following inequality is automatically true:
\begin{multline} \label{eq: optimization conjecture easy direction}
\liminf_{N \to \infty} \mathbb{E} \left[ \sup_{V_1,\dots,V_s \in \mathcal{U}(M_N(\mathbb{C}))} \tr_n(p(U_1^{(N)},\dots,U_r^{(N)},V_1,\dots,V_s)) \right] \\ \geq \sup_{v_1,\dots,v_s \in \mathcal{U}(L(\mathbb{F}_r))} \tau(p(u_1,\dots,u_r,v_1,\dots,v_s)).
\end{multline}
Indeed, $v_1$, \dots, $v_s \in L(\mathbb{F}_r)$, then they can be approximated in $2$-norm by $*$-polynomials $w_j = p_j(u_1,\dots,u_r)$ for $j = 1$, \dots, $s$.  Letting $W_j^{(N)} = p_j(U_1^{(N)},\dots,U_r^{(N)})$, we have
\begin{multline*}
\liminf_{N \to \infty} \mathbb{E} \left[ \sup_{V_1,\dots,V_s \in \mathcal{U}(M_N(\mathbb{C}))} \tr_n(p(U_1^{(N)},\dots,U_r^{(N)},V_1,\dots,V_s)) \right] \\ \geq \lim_{n \to \infty} \mathbb{E} \tr_n(p(U_1^{(N)},\dots,U_r^{(N)},W_1^{(N)},\dots,W_s^{(N)})) \\
= \tau(p(u_1,\dots,u_r,w_1,\dots,w_s))
\end{multline*}
by Voiculescu's asymptotic freeness theorem \cite{VoicAsyFree}.  Since $w_j$ can be taken arbitrarily close to $v_j$, we have \eqref{eq: optimization conjecture easy direction}. Hence, the conjecture is equivalent to
\begin{multline} \label{eq: optimization conjecture hard direction}
\limsup_{N \to \infty} \mathbb{E} \left[ \sup_{V_1,\dots,V_s \in \mathcal{U}(M_N(\mathbb{C}))} \tr_n(p(U_1^{(N)},\dots,U_r^{(N)},V_1,\dots,V_s)) \right] \\ \leq \sup_{v_1,\dots,v_s \in \mathcal{U}(L(\mathbb{F}_r))} \tau(p(u_1,\dots,u_r,v_1,\dots,v_s)).
\end{multline}
Moreover, since the above function is Lipschitz in $u_1$, \dots, $u_r$ in the unitary group, concentration of the Haar measure on $\mathcal{U}(M_N(\mathbb{C}))$ implies that the statement holds for the expectation if and only if it holds almost surely.
\end{remark}

\begin{prop} \label{prop: existential versus optimization}
Let $U_1^{(N)}$, \dots, $U_r^{(N)}$ be independent Haar random unitaries.  Recall that almost surely (i.e. on a co-null subset of $\prod_{N \in \mathbb{N}} \mathcal{U}(M_N(\mathbb{C}))$), for every free ultrafilter $\omega \in \beta \mathbb{N} \setminus \mathbb{N}$, there is an embedding $\Theta_{U,\omega}: L(\mathbb{F}_r) \to \prod_{N \to \omega} M_N(\mathbb{C})$.

Conjecture \ref{conj: random optimization conjecture} holds if and only if, for every ultrafilter $\omega$, for almost every $U \in \prod_{N \in \mathbb{N}} \mathcal{U}(M_N(\mathbb{C}))$, $\Theta_{U,\omega}$ is an existential embedding of tracial von Neumann algebras.
\end{prop}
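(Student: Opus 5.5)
The plan is to reduce both sides to statements about the set of joint laws of $(u, v)$ as $v$ ranges over $s$-tuples of unitaries, and compare these sets in $L(\F_r)^\omega$ and in $\cM=\prod_{N\to\omega}\M_N(\C)$.

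Fix $s$ and let
\[
K_Q^{s}=\{\text{laws of } (u,v) : v\in\cU(L(\F_r)^\omega)^s\},
\]
viewed as tracial states on $\C[\F_{r+s}]$. By the diagonal-approximation argument of the Remark, $K_Q^s$ is the closure of the laws of $(u,v)$ with $v\in \cU(L(\F_r))^s$. Similarly let $K_{\cM,U}^s$ be the laws of $(\Theta_{U,\omega}(u),V)$ with $V\in\cU(\cM)^s$. By \cite[Lemma 5.10]{GJNS2022}, applied to separable intermediate algebras, $\Theta_{U,\omega}$ is existential if and only if $K_{\cM,U}^s\subseteq K_Q^s$ for every $s$.

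Two reductions make unitaries enough and handle separability. First, a generating tuple of a separable $M_0$ may be taken to consist of unitaries, since any contraction is the average of two unitaries. Second, $M_0$ is countably generated, so a diagonal argument over finite subtuples passes from finitely many unitaries to all of $M_0$.

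For the direction "existential $\Rightarrow$ Conjecture \ref{conj: random optimization conjecture}", fix $p$ and argue as follows.
\begin{enumerate}
\item Use a measurable selection to choose, for each $U$ and $N$, maximizers $V^{(N)}(U)$ of $\tr_N(p(U^{(N)},V))$.
\item Pass to $\cM$. Existentiality gives that the law of $(\Theta_{U,\omega}(u),(V^{(N)})_{N\to\omega})$ lies in $K_Q^s$. Hence $\lim_{N\to\omega}\sup_V \tr_N p \le \sup_{v}\tau(p(u,v))$.
\item Conclude with the Remark. Since this holds for every $\omega$, for almost every $U$ it gives the $\limsup$ inequality \eqref{eq: optimization conjecture hard direction}; the concentration part of the Remark (Theorem \ref{thm: unitary concentration}) then converts the almost-sure bound into the bound in expectation.
\end{enumerate}
A countable dense family of $p$ (rational coefficients) suffices, because the relevant suprema are Lipschitz in the coefficients of $p$.

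For the converse, fix $U$ in the conull set where the almost-sure form of \eqref{eq: optimization conjecture} holds for all $p$ in a countable dense family. Then for every self-adjoint $p$,
\[
\sup_{\ell\in K_{\cM,U}^s}\ell(p)\le \sup_{\ell\in K_Q^s}\ell(p).
\]
Hahn–Banach in the real space of self-adjoint linear functionals then yields
\[
K_{\cM,U}^s\subseteq \overline{\mathrm{conv}}\,K_Q^s .
\]
So the converse reduces to showing that $K_Q^s$ is convex.

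I expect this convexity to be the main obstacle. Linear statements only see closed convex hulls, while existentiality needs the actual set of types. Producing $t\ell_1+(1-t)\ell_2$ naively requires a projection of trace $t$ commuting with $u$, and $L(\F_r)$ has trivial asymptotic relative commutant (it is full), so that route is unavailable.

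What I would try first is to avoid needing convexity of the laws themselves, by encoding nonlinear conditions linearly with auxiliary variables. To force $v$ to approximately realize a prescribed finite piece of type, $\tau(q_i(u,v))\approx c_i$, I would add $s'$ further unitary variables $w$ and maximize a linear functional such as
\[
-\sum_i \tau\big((q_i(u,v)-c_i)^*(q_i(u,v)-c_i)\big)
\]
built from terms of the form $\tau(w^* q_i(u,v) w)$, exploiting Popa-type asymptotic freeness of $w$ from $L(\F_r)$ in $L(\F_r)^\omega$ to make traces factor. The goal is that the value $0$ is attained in $\cM$ if and only if it is approximately attained in $L(\F_r)$.

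If that fails, I would fall back on checking whether Conjecture \ref{conj: random optimization conjecture} with arbitrary $s$ already implies the statement for continuous connectives of linear formulas. The idea would be to use matrix amplifications $\M_k(\cdot)$ together with a free projection to implement convex combinations within both ultrapowers simultaneously.
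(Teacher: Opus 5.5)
Your direction ``existential $\Rightarrow$ Conjecture~\ref{conj: random optimization conjecture}'' is essentially the paper's: take maximizers for a fixed $U$ and realize their law in $L(\F_r)^\omega$. One small fix: the conull set may depend on $\omega$, so apply concentration \emph{before} passing to the limit. What you then bound along each $\omega$ is the deterministic sequence $\E[\sup_V\tr_N p]$, and you can vary $\omega$ freely.

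The genuine gap is in the converse. Your Hahn--Banach step correctly gives $K_{\cM,U}^s\subseteq\overline{\mathrm{conv}}\,K_Q^s$. The reduction you then propose, convexity of $K_Q^s$, is not just hard but false.
\begin{enumerate}
\item Since $L(\F_r)$ is full, $L(\F_r)'\cap L(\F_r)^\omega=\C$. So every $\mathrm{W}^*(u,v)$ with $v\in\cU(L(\F_r)^\omega)^s$ is a factor.
\item Hence every point of $K_Q^s$ is an extreme point of the trace space $\mathcal{T}(\mathrm{C}^*_u(\F_{r+s}))$.
\item A convex set all of whose points are extreme in an ambient convex set is a singleton, and $K_Q^s$ is not one (compare $v=1$ with $v=u_1$).
\end{enumerate}
Your fallbacks are not carried out. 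The first, as written, encodes the wrong condition: $-\sum_i\tau((q_i-c_i)^*(q_i-c_i))$ being near $0$ forces $q_i(u,v)\approx c_i$ in $\norm{\cdot}_2$, not $\tau(q_i(u,v))\approx c_i$. Also, maximizing over auxiliary $w$ cannot force $w$ to be free from $(u,v)$.

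The missing idea is to use extremality on the matrix side instead of convexity on the $L(\F_r)$ side.
\begin{enumerate}
\item Almost surely $\Theta_{U,\omega}(L(\F_r))'\cap\cM=\C$ for every $\omega$. The paper takes this irreducibility as known; it follows, e.g., from Hastings' quantum expander estimate quoted in the introduction. A spectral gap for $A\mapsto\frac{1}{2r}\sum_j(U_jAU_j^*+U_j^*AU_j)$ on trace-zero matrices forces matrices asymptotically commuting with all $U_j^{(N)}$ to be asymptotically scalar.
\item Hence every intermediate algebra $\mathrm{W}^*(\Theta_{U,\omega}(u),V)$ is a factor. So every point of $K_{\cM,U}^s$ is extreme in $\mathcal{T}(\mathrm{C}^*_u(\F_{r+s}))$, and a fortiori extreme in $\overline{\mathrm{conv}}\,K_Q^s$, which contains it by your Hahn--Banach step.
\item By Milman's converse to the Krein--Milman theorem, such a point lies in the compact set $K_Q^s$ itself. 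That is exactly existentiality.
\end{enumerate}
The paper implements this through Lemma~\ref{lem: extreme points}. For the extreme law $\sigma_0$ of $(u,w)$, each neighborhood of $\sigma_0$ contains a slice $\{\varphi:\varphi(p_k)>\sigma_0(p_k)-\varepsilon\}$ with $p_k$ a self-adjoint element of $\C[\F_{r+s}]$. The conjectured inequality for $p_k$ then supplies $v^{(k)}\in\cU(L(\F_r))^s$ whose laws converge to $\sigma_0$. So $(v^{(k)})_{k\to\omega}$ realizes $\sigma_0$ in $L(\F_r)^\omega$.
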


The first ingredient is a convex analysis argument, which is what ultimately allows us to look at only individual traces in Conjecture \ref{conj: random optimization conjecture}, rather than more complicated functions depending on several traces.

\begin{lem} \label{lem: extreme points}
Let $A$ be a $\mathrm{C}^*$-algebra, let $A_0$ be a dense $*$-subalgebra, and let $\tau$ be an extreme point in $\mathcal{T}(A)$.  For every neighborhood $\mathcal{O}$ of $\tau$, there exists $a \in A_0$ self-adjoint and $\varepsilon > 0$ such that
\[
\forall \varphi \in \mathcal{T}(A), \varphi(a) > \tau(a) - \varepsilon \implies \varphi \in \mathcal{O}.
\]
\end{lem}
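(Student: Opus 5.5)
The plan is to reduce the statement to Choquet's lemma: for a compact convex set $K$ in a locally convex space, the open slices containing an extreme point form a neighborhood base at that point. I then transfer the separating functional from $A$ to the dense subalgebra $A_0$ by a norm approximation. Throughout I use that $\mathcal{T}(A)$ is a weak$^*$-compact convex subset of $A^*$. This holds in the case of interest, $A=\mathrm{C}^*_u(\F_r)$ unital: $\mathcal{T}(A)$ is weak$^*$-closed in the state space. In the weak$^*$ topology the continuous linear functionals on $A^*$ are exactly the evaluations $\varphi\mapsto\varphi(a)$ for $a\in A$.

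\textbf{Step 1 (reduce $\mathcal{O}$).} Shrinking $\mathcal{O}$, I may assume $\mathcal{O}=(\tau+V)\cap\mathcal{T}(A)$, where $V$ is an open convex weak$^*$ neighborhood of $0$. Choose an open convex neighborhood $W$ of $0$ with $\overline{W}+\overline{W}\subseteq V$. The set $C=\mathcal{T}(A)\setminus(\tau+V)$ is compact, so it is covered by finitely many sets $y_i+W$ with $y_i\in C$. Put
\[
K_i=\mathcal{T}(A)\cap(y_i+\overline{W}).
\]
Each $K_i$ is compact and convex, and $\tau\notin K_i$, since otherwise $y_i\in\tau+\overline{W}-\overline{W}\subseteq \tau+V$.

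\textbf{Step 2 (extremality).} Let $L=\operatorname{conv}(K_1\cup\dots\cup K_n)$. It is compact, being the image of a compact set (product of the $K_i$ with a simplex) under a continuous map. Every point of $L$ has the form $\sum_i t_i k_i$ with $k_i\in K_i$ and $t_i\geq 0$, $\sum_i t_i=1$. If $\tau$ had this form, extremality of $\tau$ in $\mathcal{T}(A)$ would force $\tau=k_i$ for some $i$ with $t_i>0$, contradicting $\tau\notin K_i$. So $\tau\notin L$. This step is where extremality is used, and it is the only nontrivial point; the remaining steps are bookkeeping.

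\textbf{Step 3 (separation and approximation).} By Hahn--Banach, choose $b\in A$ and $\delta>0$ with
\[
\re\varphi(b)\le \re\tau(b)-\delta\quad\text{for all }\varphi\in L.
\]
Set $a'=(b+b^*)/2$. Since tracial states are hermitian, $\varphi(a')=\re\varphi(b)$ for every $\varphi\in\mathcal{T}(A)$, so $a'$ is self-adjoint and separates $\tau$ from $L$ with gap $\delta$. By density, pick $a_1\in A_0$ with $\|a_1-a'\|<\delta/3$ and let $a=(a_1+a_1^*)/2\in A_0$. Then $a$ is self-adjoint and $\|a-a'\|<\delta/3$, so $|\varphi(a)-\varphi(a')|<\delta/3$ for all states $\varphi$. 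Take $\varepsilon=\delta/3$.

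Suppose $\varphi\in\mathcal{T}(A)$ satisfies $\varphi(a)>\tau(a)-\varepsilon$. Then
\[
\varphi(a')>\tau(a')-\varepsilon-2\delta/3=\tau(a')-\delta,
\]
so $\varphi\notin L\supseteq\bigcup_i K_i\supseteq C$. Hence $\varphi\in\tau+V$, that is, $\varphi\in\mathcal{O}$, which proves the lemma.
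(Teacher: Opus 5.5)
Your proof is correct, but it excludes $\tau$ by a different route than the paper. Both arguments show that $\tau$ lies outside a weak$^*$-compact convex set containing $\mathcal{T}(A)\setminus\mathcal{O}$, then apply Hahn--Banach, symmetrize, and approximate from $A_0$. Your final step matches the paper's, with cleaner $\delta/3$ bookkeeping.

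\textbf{The paper's route.} It takes $\mathcal{K}_0=\mathcal{T}(A)\setminus\mathcal{O}$ and lets $\mathcal{K}$ be the image of the barycenter map $\mathcal{P}(\mathcal{K}_0)\to\mathcal{T}(A)$. This image is compact and convex; in fact it is the closed convex hull of $\mathcal{K}_0$. The paper excludes $\tau$ from $\mathcal{K}$ by quoting Bauer's characterization: an extreme point is represented only by its own Dirac mass.

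\textbf{Your route.} You cover the complement by finitely many small compact convex pieces $K_i$ that avoid $\tau$. Then only finite convex combinations occur, and extremality applies directly. This is the standard proof of Milman's converse, or Choquet's lemma.

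\textbf{Comparison.} The paper's version is shorter, but it outsources the real content to a Choquet-theoretic fact whose own proof uses a localization much like yours. Yours is self-contained and uses only local convexity and compactness. Both need $\mathcal{T}(A)$ to be weak$^*$-compact, which you rightly note holds in the unital case used later.

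\textbf{A small repair in Step 1.} The contradiction there actually gives $y_i\in\tau-\overline{W}$, so what you need is $-\overline{W}\subseteq V$. The condition $\overline{W}+\overline{W}\subseteq V$ does not imply this unless $W$ is symmetric. Choose $W$ balanced, for example of the form $\{\psi:|\psi(b_j)|<\eta,\ j=1,\dots,m\}$; this is always possible in the weak$^*$ topology.
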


\begin{proof}
Let $\mathcal{K}_0 = \mathcal{T}(A) \setminus \mathcal{O}$.  Let $\mathcal{P}(\mathcal{K}_0)$ be the space of Borel probability measures on $\mathcal{K}_0$ with the weak-$*$ topology.  Let $F: \mathcal{P}(\mathcal{K}_0) \to \mathcal{T}(A)$ be the map that sends a measure $\mu$ to its barycenter, that is,
\[
F(\mu)(a) = \int_{\mathcal{K}_0} \varphi(a)\,d\mu(\varphi).
\]
Let $\mathcal{K}$ be the image of $F$.  Since $F$ is continuous and $\mathcal{P}(\mathcal{K}_0)$ is compact, $\mathcal{K}$ is compact.  And $\mathcal{K}$ is also convex since the map $F$ is affine.  Clearly, $\mathcal{K}$ contains $\mathcal{K}_0$, since $F(\delta_\varphi) = \varphi$.

Note that $\tau$ is not in $\mathcal{K}$.  Indeed, since $\tau$ is extreme, $\delta_\tau$ is the only measure whose barycenter is $\tau$.  So $\tau$ is a point outside the compact convex set $\mathcal{K}$.  By the Hahn--Banach theorem, there exists some $a \in A$ and $\varepsilon > 0$ such that $\re \varphi(a) \leq \re \tau(a) - \varepsilon$ for all $\varphi \in \mathcal{K}$.  By replacing $a$ with $(a+a^*)/2$ assume $a$ is self-adjoint.  Clearly, we can also choose $a$ from the dense subset $A_0$ at the expense of replacing $\varepsilon$ with $2 \varepsilon$.  Thus, $\varphi(a) > \tau(a) - \varepsilon$ implies that $\varphi$ is not in $\mathcal{K}$, and hence $\varphi \in \mathcal{O}$.
\end{proof}

\begin{proof}[Proof of Proposition \ref{prop: existential versus optimization}]
Note that every $*$-polynomial is Lipschitz with respect to $\norm{\cdot}_2$ on $\mathcal{U}(M_N(\mathbb{C}))^r$.  Moreover, since the class of $L$-Lipschitz functions is closed under suprema, the map
\[
\varphi(U_1,\dots,U_r) = \sup_{V_1, \dots, V_s} \tr_N(p(U_1,\dots,U_r,V_1,\dots,V_s)
\]
is also Lipschitz on $\mathcal{U}(M_N(\mathbb{C}))^r$ with a Lipschitz constant independent of $N$.  Therefore, by concentration of measure (Theorem \ref{thm: unitary concentration} (i)), we see that almost surely
\begin{equation} \label{eq: concentration for existential conjecture}
\lim_{N \to \infty} |\varphi(U_1^{(N)},\dots,U_r^{(N)}) - \mathbb{E} \varphi(U_1^{(N)},\dots,U_r^{(N)})| = 0.
\end{equation}
Moreover, by considering a countable set of polynomials that are dense in $\mathrm{C}_u^*(\mathbb{F}_{r+s})$ for each $s$, we see that almost surely this condition holds for all $p$ and $s$ simultaneously.

Assume \eqref{eq: optimization conjecture} holds. 
Fix an ultrafilter $\omega$ and assume that $U = (U^{(N)})_N \in \prod_{N \in \mathbb{N}} \mathcal{U}(M_N(\mathbb{C}))$ is in the almost sure event above.  Consider a separable von Neumann algebra $M$ with
\[
\Theta_{U,\omega}(L(\mathbb{F}_r)) = \mathrm{W}^*(u_1,\dots,u_r) \subseteq M \subseteq \prod_{N \to \omega} \mathbb{M}_N,
\]
and we will show that there exists an embedding $\iota: M \to L(\mathbb{F}_r)^\omega$ such that $\iota \circ \Theta_{U,\omega} = \Delta$.  By diagonalization, it suffices to consider the case when $M$ is finitely generated, say $M$ is generated by $\Theta_{U,\omega}(u_1)$, \dots, $\Theta_{U,\omega}(u_r)$ and unitaries $w_1$, \dots, $w_s$.

Let $A = \mathrm{C}_u^*(\mathbb{F}_{r+s})$ and $A_0 = \mathbb{C} \mathbb{F}_{r+s}$, and let $\sigma_0 \in \mathcal{T}(A)$ be the non-commutative distribution of $(u_1,\dots,u_r,w_1,\dots,w_s)$ (see \S \ref{sec:preliminaries}).  Let $(\mathcal{O}_k)_{k \in \N}$ be a sequence of neighborhoods of $\sigma_0$ such that $\mathcal{O}_{k+1} \subseteq \mathcal{O}_k$ and $\bigcap_{k \in \N} \mathcal{O}_k = \varnothing$.  Since $\mathrm{W}^*(u_1,\dots,u_r)$ is an irreducible subfactor of the matrix ultraproduct, we know that $M$ is a factor.  Therefore, $\sigma_0$ is extreme in $\mathcal{T}(A)$.  Hence, by Lemma \ref{lem: extreme points}, there exists $p_k \in \C \mathbb{F}_{r+s}$ such that $\sigma(p_k) > \sigma_0(p_k) - \varepsilon$ implies that $\sigma \in \mathcal{O}_k$.

Write $w_j = (W_j^{(N)})_{N \to \omega}$ where $W_j^{(N)} \in \mathcal{U}(M_N(\mathbb{C}))$.  In the almost sure event defined above,
\begin{align*}
\sup_{v_1, \dots, v_s \in \mathcal{U}(L(\mathbb{F}_m))} &\tau_{L(\mathbb{F}_r)}(p_k(u_1,\dots,u_r,v_1,\dots,v_s)) \\
&\geq \limsup_{N \to \infty} \sup_{V_1,\dots,V_s \in \mathcal{U}(M_N(\mathbb{C}))} \tr_N(p_k(U_1^{(N)},\dots,U_r^{(N)},V_1,\dots,V_s)) \\
&\geq \lim_{N \to \omega} \tr_N(p_k(U_1^{(N)},\dots,U_r^{(N)},W_1^{(N)},\dots,W_s^{(N)})) \\
&= \tau_M(p_k(u_1,\dots,u_r,w_1,\dots,w_s)).
\end{align*}
Hence, there exist $v_1^{(k)}$, \dots, $v_s^{(k)}$ in $L(\mathbb{F}_r)$ such that
\[
\tau_{L(\mathbb{F}_r)}(p_k(u_1,\dots,u_r,v_1^{(k)},\dots,v_s^{(k)})) > \varphi_0(p_k) - \varepsilon.
\]
Thus, letting $\sigma_k \in \mathcal{T}(A)$ be the non-commutative law of $(u_1,\dots,u_r,v_1^{(k)},\dots,v_s^{(k)})$, we have $\sigma_k \in \mathcal{O}_k$.  Therefore, $\lim_{k \to \omega} \sigma_k = \sigma_0$.  Let $\tilde{w}_j = (v_j^{(k)})_{k \to \omega}$ in the ultrapower $L(\mathbb{F}_r)^\omega$.  Then the law of $(u_1,\dots,u_r,\tilde{w}_1,\dots,\tilde{w}_s)$ is $\sigma_0$, and therefore, there is an embedding $\iota$ of $M$ into $L(\mathbb{F}_r)^\omega$ given by
\[
\iota(u_j) = u_j, \qquad \iota(w_j) = \tilde{w}_j.
\]
This proves the embedding is existential as desired.

Conversely, assume that the embedding is existential for almost every $U \in \prod_{n \in \N} \mathcal{U}(M_N(\mathbb{C}))^r$.  Fix $U$ in the almost sure event given above, and fix a self-adjoint non-commutative $*$-polynomial $p$ in $r + s$ variables.  Let $V_1^{(N)}$, \dots, $V_s^{(N)} \in \mathcal{U}(M_N(\mathbb{C}))$ such that
\begin{multline} \label{eq: number 1}
\tr_N[p(U_1^{(N)},\dots,U_r^{(N)}(\omega),W_1^{(N)},\dots,W_s^{(N)})] \\
= \sup_{V_1,\dots,V_s \in \mathcal{U}(M_N(\mathbb{C}))} \tr_n[p(U_1^{(N)}(\omega),\dots,U_r^{(N)}(\omega),V_1,\dots,V_s)].
\end{multline}
Let
\[
w_j = (W_j^{(N)})_{N \to \omega} \in \prod_{N \to \omega} M_N(\mathbb{C}).
\]
Thus,
\begin{multline} \label{eq: number 2}
\tau[p(\Theta_{U,\omega}(u_1),\dots,\Theta_{U,\omega}(u_r),w_1,\dots,w_s)] \\
=\lim_{N \to \omega} \tr_N[p(U_1^{(N)},\dots,U_r^{(N)},W_1^{(N)},\dots,W_s^{(N)})].
\end{multline}
Let $M = \mathrm{W}^*(\Theta_{U,\omega}(u_1),\dots,\Theta_{U,\omega}(u_r),v_1,\dots,v_s) \supseteq \Theta_{U,\omega}(L(\mathbb{F}_r))$.  Since the embedding $\Theta_{U,\omega}$ is existential, there is an embedding $\iota: M \to L(\mathbb{F}_r)^\omega$ such that $\iota \circ \Theta_{U,\omega} = \Delta$.  
By a diagonal argument (or because the diagonal embedding $L(\mathbb{F}_r) \to L(\mathbb{F}_r)^\omega$ is existential), we have that 
\begin{align*}
\sup_{v_1, \dots, v_s \in \mathcal{U}(L(\mathbb{F}_r))}& \tau(p(u_1,\dots,u_r,v_1,\dots,v_s))\\
&=\sup_{v_1,\dots,v_s \in \mathcal{U}(L(\mathbb{F}_r)^{\cU})} \tau(p(\Delta(u_1),\dots,\Delta(u_r),v_1,\dots,v_s)),
\end{align*}
 with $\Delta$ the diagonal embedding. Thus:
\begin{align}
\sup_{v_1, \dots, v_s \in \mathcal{U}(L(\mathbb{F}_r))} & \tau(p(u_1,\dots,u_r,v_1,\dots,v_s)) \label{eq: number 3} \\
&=
\sup_{v_1,\dots,v_s \in \mathcal{U}(L(\mathbb{F}_r)^{\cU})} \tau(p(\Delta(u_1),\dots,\Delta(u_r),v_1,\dots,v_s)) \nonumber \\
&\geq \tau(p(\Delta(u_1),\dots,\Delta(u_r),\iota(w_1),\dots,\iota(w_s))) \nonumber \\
&= \tau(p(\Theta_\omega(u_1),\dots,\Theta_\omega(u_r),w_1,\dots,w_r)). \nonumber
\end{align}
By \eqref{eq: number 3}, \eqref{eq: number 2}, \eqref{eq: number 1}, and \eqref{eq: concentration for existential conjecture},
\begin{align*}
\sup_{v_1, \dots, v_s \in \mathcal{U}(L(\mathbb{F}_r))} &\tau(p(u_1,\dots,u_r,v_1,\dots,v_s)) \\
&\geq 
\lim_{n \to \omega} \sup_{V_1,\dots,V_s \in \mathcal{U}(M_N(\mathbb{C}))} \tr_N[p(U_1^{(N)},\dots,U_r^{(N)},V_1,\dots,V_s)] \\
&= \lim_{N \to \omega} \mathbb{E} \left[ \sup_{V_1,\dots,V_s \in \mathcal{U}(M_N(\mathbb{C}))} \tr_N[p(U_1^{(N)},\dots,U_r^{(N)},V_1,\dots,V_s)] \right].
\end{align*}
Since the ultrafilter $\omega$ was arbitrary, \eqref{eq: optimization conjecture hard direction} holds.
\end{proof}

We close the paper by connecting the two Conjectures, and in particular finish the proof of Theorem \ref{thm:intro main thm} (\ref{item: intro conjecture bridge}).

\begin{thm}
Conjecture \ref{conj: random optimization conjecture} implies Conjecture \ref{conj: the conj}.   
\end{thm}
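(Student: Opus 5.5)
The plan is to chain three results already established in the paper: Proposition \ref{prop: existential versus optimization}, Proposition \ref{prop:DingPeterson}, and Theorem \ref{thm: weak coarseness equivalent}.

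\emph{Step 1: from the optimization conjecture to existentiality.} Assume Conjecture \ref{conj: random optimization conjecture}. By Proposition \ref{prop: existential versus optimization}, for every free ultrafilter $\omega$ the embedding $\Theta_{U,\omega}$ is existential for almost every $U$.

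There is a quantifier issue here. Theorem \ref{thm: weak coarseness equivalent} needs a single conull set that works for all $\omega$ simultaneously. Inspecting the proof of Proposition \ref{prop: existential versus optimization} resolves this. The almost sure event used there is the concentration event \eqref{eq: concentration for existential conjecture}, taken over a countable dense family of polynomials $p$ and all $s$, intersected with the conull set on which $\Theta_{U,\omega}$ exists for every $\omega$. This event does not depend on $\omega$. On it, the forward direction of that proof shows that $\Theta_{U,\omega}$ is existential for every $\omega$ at once. So I fix one conull $\Omega$ such that $\Theta_{U,\omega}$ is existential for all $U\in\Omega$ and all $\omega\in\beta\N\setminus\N$.

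\emph{Step 2: from existentiality to weak coarseness.} For $U\in\Omega$ and any $\omega$, Proposition \ref{prop:DingPeterson}, which is ultimately the Ding--Peterson theorem, shows that the $L(\F_r)$-$L(\F_r)$ bimodule
\[
L^2\Bigl(\prod_{N\to\omega}\M_N(\C)\Bigr)\ominus L^2(\Theta_{U,\omega}(L(\F_r)))
\]
is weakly coarse. That is, $\Theta_{U,\omega}$ is a weakly coarse embedding for every $\omega$, for almost every $U$.

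\emph{Step 3: back to the random matrix statement.} The converse direction of Theorem \ref{thm: weak coarseness equivalent} states that this almost sure weak coarseness, for all ultrafilters, implies Conjecture \ref{conj: the conj}. This completes the argument.

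The main obstacle is the quantifier issue in Step 1. The statement of Proposition \ref{prop: existential versus optimization} reads ``for every $\omega$, for almost every $U$'', while Theorem \ref{thm: weak coarseness equivalent} requires ``for almost every $U$, for every $\omega$''. The point to verify is that the null set discarded in the proof of Proposition \ref{prop: existential versus optimization} is independent of $\omega$.

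A second check is that existentiality, which is defined via separable intermediate algebras $M_0$, suffices for weak coarseness of the whole non-separable bimodule. This is already handled inside the proof of Proposition \ref{prop:DingPeterson}: any finite family of vectors, together with the finitely many operators involved, lies in $L^2(M_0)$ for some separable $M_0$, and weak containment only requires checking norms of finitely supported elements $\sum_l x_l\otimes y_l^{\op}$ against such vectors.
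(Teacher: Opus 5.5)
Your proposal is correct and follows the paper's proof exactly. Conjecture~\ref{conj: random optimization conjecture} gives existentiality via Proposition~\ref{prop: existential versus optimization}, Proposition~\ref{prop:DingPeterson} upgrades this to weak coarseness, and the converse direction of Theorem~\ref{thm: weak coarseness equivalent} returns Conjecture~\ref{conj: the conj}. Your observation that the almost sure event in the proof of Proposition~\ref{prop: existential versus optimization} does not depend on $\omega$ makes explicit the quantifier swap that the paper's one-line argument (``almost surely $\Theta_{U,\omega}$ is existential for all $\omega$'') passes over silently.
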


\begin{proof}
   If conjecture \ref{conj: random optimization conjecture} holds, then by Proposition \ref{prop: existential versus optimization}, we have that almost surely $\Theta_{U,\omega}$ is existential for all $\omega$. By Proposition \ref{prop:DingPeterson}, this implies that $\Theta_{U,\omega}$ is almost surely weakly coarse for all $\omega.$ Hence, by Theorem \ref{thm: weak coarseness equivalent} Conjecture \ref{conj: the conj} holds. 
   \end{proof}

\providecommand{\bysame}{\leavevmode\hbox to3em{\hrulefill}\thinspace}
\providecommand{\MR}{\relax\ifhmode\unskip\space\fi MR }
\providecommand{\MRhref}[2]{%
  \href{http://www.ams.org/mathscinet-getitem?mr=#1}{#2}
}
\providecommand{\href}[2]{#2}

%

\end{document}